\newtheorem{theorem}{Theorem}[section]
\newtheorem{lemma}[theorem]{Lemma}
\newtheorem{proposition}[theorem]{Proposition}
\theoremstyle{definition}
\newtheorem{remark}[theorem]{Remark}
\newtheorem{example}[theorem]{Example}
\numberwithin{equation}{section}
\providecommand{\U}[1]{\protect\rule{.1in}{.1in}}
\begin{document}
\baselineskip=17pt

\pagestyle{myheadings}
\markboth{L.M. C\^{a}mara and M.A.S. Ruas}{On the moduli space of quasi-homogeneous functions}

\title{On the moduli space of quasi-homogeneous functions}
\author[L.M. C\^{a}mara]{Leonardo Meireles C\^{a}mara}
\address{Departamento de Matem\'{a}tica, Centro de Ci\^{e}ncias
   Exatas, Universidade Federal do Esp\'{\i}rito Santo, CEP 29075-910,
   Vit\'{o}ria-ES, Brasil}
\email{ leonardo.camara@ufes.br}
\author[M.A.S. Ruas]{Maria Aparecida Soares Ruas}
\address{Instituto de Ci\^{e}ncias
  Matem\'{a}ticas e de Computac\~{a}o,Universidade de S\~{a}o Paulo,
  Centro, Caixa Postal: 668, CEP 13560-970,S\~{a}o Carlos-SP, Brasil}
\email{maasruas@icmc.usp.br}
\subjclass[2010]{Primary 32S05; Secondary 14J17}
\keywords{bi-Lipschitz moduli, analytic moduli, quasi-homogeneous polynomials}
\begin{abstract}
We relate the moduli space of analytic equivalent germs of reduced
quasi-homogeneous functions at $(\mathbb{C}^{2},0)$ with their bi-Lipschitz
equivalence classes. We show that any non-degenerate continuous family of
(reduced) quasi-homogeneous functions with constant Henry-Parusi\'{n}ski
invariant is analytically trivial. Further we show that there are only a
finite number of distinct bi-Lipschitz classes among quasi-homogeneous
functions with the same Henry-Parusi\'{n}ski invariant providing a maximum
quota for this number.
\end{abstract}

\maketitle

\section{Preliminaries\label{prelim.}}

The main goal of this paper is to relate the moduli space of analytically
equivalent germs of quasi-homogeneous functions with their bi-Lipschitz
classes. In order to turn our statements more precise, and to give appropriate
answers, we need to introduce some terminology.

We say that two function germs $f,g\in\mathcal{O}_{2}$ are
\textit{analytically} \textit{equivalent} if there is $\Phi\in
\operatorname*{Diff}(\mathbb{C}^{2},0)$ such that $g=f\circ\Phi$. In this
case, $\Phi$ is said to be an \textit{analytic equivalence} between $f$ and
$g$. A germ of holomorphic function $f$ is said to be
\textit{quasi-homogeneous} if it is analytically equivalent to a
quasi-homogeneous polynomial. More precisely, if there is a local system of
coordinates in which $f$ can be written in the form $f(x,y)=\sum
_{ai+bj=d}a_{ij}x^{i}y^{j}$ where $a,b,d\in\mathbb{N}$.


Recall that a germ of holomorphic function $f \in \mathcal {O}_2$ is  reduced if 
 it has isolated singularity. Up to a permutation of the variables $x$ and $y$, any reduced quasi-homogeneous polynomial $f$ with weights $(p,q)$ can be (uniquely) written in the form%
\begin{equation}\label{eq_normal_form1}
f(x,y)=x^{m}y^{k}%
{\displaystyle\prod\limits_{j=1}^{n}}
(y^{p}-\lambda_{j}x^{q})
\end{equation}
where $m,k\in\{0,1\}$, $p,q\in\mathbb{Z}_{+}$, $p\leq q$, $\gcd(p,q)=1$, and
$\lambda_{j}\in\mathbb{C}^{\ast}$ are pairwise distinct. In particular
$C=f^{-1}(0)$ has $n+m+k$ distinct branches. The triple $(p,q,n)$ is clearly
an analytic invariant of the curve. In case $m=k=0$, this polynomial is called
\emph{commode} (\cite{Ar1980}, \cite{CaSc2011}). Up to a local change of
coordinates, the normal forms given by the sets \textrm{I)}, \textrm{II)} and
\textrm{III)} below determine a stratification of the moduli space of
analytically equivalent germs of singular quasi-homogeneous functions (cf.
\cite{CaSc2011}, \cite{Kang}):%

$$
\begin{array}
[c]{l}%
\text{\textrm{I) }} $k=0$\,\, \text{ and}\,\,   p=q=1,\,\, \text{i.e.},\,\, (\ref{eq_normal_form1})\,\, \text{reduces to }\,\, f(x,y)= x^{m}%
{\displaystyle\prod\limits_{j=1}^{n}}
(y-\lambda_{j}x),\\
\text{\textrm{II) }} $k=0$\,\, \text{ and}\,\,   1=p < q,\,\, \text{i.e.},\,\, (\ref{eq_normal_form1})\,\, \text{reduces to }\,\, f(x,y)=x^{m}%
{\displaystyle\prod\limits_{j=1}^{n}}
(y-\lambda_{j}x^{q}),\\
\text{\textrm{III) }}    1< p < q,\,\, \text{i.e.},\,\, (\ref{eq_normal_form1})\,\, \text{reduces to } f(x,y)=x^{m}y^{k}%
{\displaystyle\prod\limits_{j=1}^{n}}
(y^{p}-\lambda_{j}x^{q})\text{.}%
\end{array}
$$
Therefore, a reduced quasi-homogeneous function is
said to be of {type \textrm{I}, \textrm{II},} or \textrm{{III}} respectively
in cases I), II), or III) above. Note that each $\lambda_{j}$ is a root of the
polynomial $Q_{\lambda}(t)=\prod\nolimits_{j=1}^{n}(t-\lambda_{j})$, uniquely
determined by $f(x,y)$.

\begin{theorem}
[\cite{CaSc2011}]\label{CaSc}The analytic moduli space of germs of reduced
quasi-homogeneous functions is given respectively by\medskip

\begin{enumerate}
\item[(1)] $\frac{\operatorname*{Symm}(\mathbb{P}_{\Delta}^{1}(n))}%
{\operatorname*{PSL}(2,\mathbb{C})}$ for function germs of type \textrm{I}%
;\bigskip

\item[(2)] $\mathbb{Z}_{2}\times\frac{\operatorname*{Symm}(\mathbb{C}_{\Delta
}(n))}{\operatorname*{Aff}(\mathbb{C})}$ for function germs of type
\textrm{II};\bigskip

\item[(3)] $\mathbb{Z}_{2}\times\mathbb{Z}_{2}\times\frac{\operatorname*{Symm}%
(\mathbb{C}_{\Delta}^{\ast}(n))}{\operatorname*{GL}(1,\mathbb{C})}$ for
function germs of type \textrm{III},
\end{enumerate}

where for $M=\mathbb{P}^{1},\mathbb{C},\mathbb{C}^{\ast}$%
,$\,\,\operatorname{Symm}(M_{\Delta}(n))$ is the quotient space by the action
of the symmetric group $S_{n}$ on $M_{\Delta}(n):=\{(x_{1},\cdots,x_{n})\in
M^{n}:x_{i}\neq x_{j}$ for all $i\neq j\}$ given by $(\sigma,x)\mapsto
\sigma\cdot x=(x_{\sigma(1)},\cdots,x_{\sigma(n)})$.
\end{theorem}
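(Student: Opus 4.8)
The plan is to show that, within each of the strata $\mathrm{I}$, $\mathrm{II}$, $\mathrm{III}$, two normal forms \eqref{eq_normal_form1} are analytically equivalent precisely when their parameter tuples $(\lambda_1,\dots,\lambda_n)$ differ by the action of a finite-dimensional group determined by the weights, composed with the symmetric group $S_n$ that relabels branches. The starting point is that the right-equivalence class of $f$ is detected by the analytic type of the curve germ $C=f^{-1}(0)$: since $f$ is reduced and quasi-homogeneous, the triple $(p,q,n)$ and the binary data $m,k\in\{0,1\}$ recording whether $\{x=0\}$ and $\{y=0\}$ are branches are all invariants, while the roots $\lambda_j$ encode the remaining branches. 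Thus the problem splits along the three strata, and inside each it reduces to identifying which coordinate changes carry one normal form to another.

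The central reduction is that every analytic equivalence $\Phi\in\operatorname{Diff}(\mathbb{C}^2,0)$ between two normal forms may be replaced by a quasi-homogeneous (weighted-linear) automorphism realizing the same identification of parameters. I would prove this by filtering $\Phi$ by weighted order, writing $\Phi=\Phi_0+(\text{higher weighted order})$, and using the Euler relation $d\,f=p\,x f_x+q\,y f_y$ together with the finite colength of the Jacobian ideal $(f_x,f_y)$: the higher-order part of $\Phi$ is then removed by integrating a weighted-homogeneous vector field tangent to $C$, a standard homotopy argument for quasi-homogeneous singularities. This is the step I expect to be the main obstacle, since it is exactly the rigidity statement preventing higher-order terms from contributing moduli beyond the weighted-linear action.

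Once the equivalence is reduced to a weighted automorphism, I would compute the weighted automorphism group in each stratum and its induced action on $(\lambda_1,\dots,\lambda_n)$. In stratum $\mathrm{I}$ the weights are $(1,1)$, so the group is $\operatorname{GL}(2,\mathbb{C})$ acting linearly on $(x,y)$ and permuting the branch directions by the associated Möbius transformations; since no direction is distinguished, the branch slopes (the $\lambda_j$ together with $\{x=0\}$ when $m=1$) are unordered points of $\mathbb{P}^1$ acted on by $\operatorname{PGL}(2,\mathbb{C})=\operatorname{PSL}(2,\mathbb{C})$, giving item (1). In stratum $\mathrm{II}$ the weights are $(1,q)$, so besides the diagonal scalings one has the weighted shear $y\mapsto y+b\,x^{q}$; on parameters these act by $\lambda\mapsto\alpha\lambda+\beta$, i.e. by $\operatorname{Aff}(\mathbb{C})$ on points of $\mathbb{C}$. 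In stratum $\mathrm{III}$ the weights are $(p,q)$ with $1<p<q$ and $\gcd(p,q)=1$, so no weighted monomial can mix the variables and only the diagonal scalings survive, acting by $\lambda\mapsto\lambda\,\alpha^{q}\beta^{-p}$, that is, by $\operatorname{GL}(1,\mathbb{C})=\mathbb{C}^{\ast}$ on points of $\mathbb{C}^{\ast}$ (the $\lambda_j$ being nonzero by reducedness).

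Finally I would assemble the quotients. The symmetry of $\prod_j(\cdots)$ in the $\lambda_j$ yields the $S_n$-action, so the continuous part of each moduli space is $\operatorname{Symm}(M_\Delta(n))$ modulo the group found above, with $M=\mathbb{P}^1,\mathbb{C},\mathbb{C}^{\ast}$ respectively. It then remains to account for the discrete invariants: in stratum $\mathrm{I}$ the line $\{x=0\}$ is not distinguished from the other directions under $\operatorname{PGL}(2,\mathbb{C})$, so $m$ contributes no factor; in stratum $\mathrm{II}$ only $m\in\{0,1\}$ is free, contributing $\mathbb{Z}_2$; in stratum $\mathrm{III}$ both $m,k\in\{0,1\}$ are free, contributing $\mathbb{Z}_2\times\mathbb{Z}_2$. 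To close each case one checks that the diagonal scalings carry enough freedom to realize the full group action by honest right-equivalences while normalizing the leading constant of $f\circ\Phi_0$ to $1$; this is a short nonsingular linear computation in the exponents (using $n\ge 1$), completing the identifications (1)--(3).
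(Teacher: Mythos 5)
This theorem carries no proof in the paper --- it is imported verbatim from \cite{CaSc2011} (see also \cite{Kang}) --- so there is nothing internal to compare against; your outline is precisely the strategy of that cited classification: reduce an arbitrary equivalence to a weighted-homogeneous automorphism (the Euler relation/filtration rigidity step, which is indeed the only real technical content and which you correctly flag), then compute the induced action on the roots $\lambda_{j}$, namely $\operatorname{PSL}(2,\mathbb{C})$ on $\mathbb{P}^{1}$ when $p=q=1$, $\operatorname{Aff}(\mathbb{C})$ on $\mathbb{C}$ when $1=p<q$, and $\operatorname{GL}(1,\mathbb{C})$ on $\mathbb{C}^{\ast}$ when $1<p<q$, together with the discrete data $m,k$. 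The sketch is sound as stated, the only point deserving a remark being the type I bookkeeping (when $m=1$ the branch $x=0$ is an $(n+1)$-st unmarked point of $\mathbb{P}^{1}$, so ``$n$'' must be read as the total number of branches), which is a convention issue rather than a gap.
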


The analytic classification of the non-reduced case can be achieved by a
similar reasoning (\cite{CaSc2011}).

Now let us turn to the bi-Lipschitz equivalence. Two function-germs
$f,g\in\mathcal{O}_{2}$ are called \emph{bi-Lipschitz equivalent} if there
exists a bi-Lipschitz map-germ $\phi\colon(\mathbb{C}^{2},0)\rightarrow
(\mathbb{C}^{2},0)$ such that $f=g\circ\phi$.

In \cite{PaHe2003}, Henry and Parusi\'{n}ski showed that the bi-Lipschitz
equivalence of analytic function germs $f:(\mathbb{C}^{2},0)\rightarrow
(\mathbb{C},0)$ admits continuous moduli. They introduce a new invariant based
on the observation that a bi-Lipschitz homeomorphism does not move much the
regions around the relative polar curves. For a single germ $f$ defined at
$(\mathbb{C}^{2},0)\,$, the invariant is given in terms of the leading
coefficients of the asymptotic expansions of $f$ along the branches of its
generic polar curve.

Fernandes and Ruas in \cite{FeRu2011} study the \emph{strong bi-Lipschitz
triviality:} two function germs $f$ and $g$ are \emph{strongly bi-Lipschitz
equivalent} if they can be embedded in a bi-Lipschitz trivial family, whose
triviality is given by integrating a Lipschitz vector field. They show that if
two quasi-homogeneous (but not homogeneous) function-germs $(\mathbb{C}%
^{2},0)\rightarrow(\mathbb{C},0)$ are strongly bi-Lipschitz equivalent then
they are analytically equivalent. This result does not hold for families of
homogeneous germs with isolated singularities and same degree since they are
Lipschitz equivalent (\cite{FeRu2004}).

In some sense, for quasi-homogeneous (but not homogeneous) real
function-germs in two variables the problem of bi-Lipschitz classification is
quite close to the problem of analytic classification.

The moduli space of bi-Lipschitz equivalence is not completely understood yet,
not even in the case of quasi-homogeneous function germs. This is the
question we address in this paper. For quasi-homogeneous (but not homogeneous) polynomials in the
plane, we ask whether Henry-Parusi\'{n}ski's invariant characterizes
completely their bi-Lipschitz class. In Theorems \ref{type II} and
\ref{type III} we compare the moduli spaces of analytic and bi-Lipschitz
equivalences. Finally, in Theorem \ref{cont. family} we prove that a
bi-Lipschitz trivial family of quasi-homogeneous (but not homogeneous)
function germs in the plane is analytically trivial.

In what follows, we assume that the quasi-homogeneous polynomial $f$ is
miniregular in $y$ in the sense that $x=0$ is not a line in the tangent cone
of the curve $(f=0)$. Finally, we note that two reduced quasi-homogeneous
function germs of the form \eqref{eq_normal_form1} are bi-Lipschitz equivalent if and only if
their corresponding commode  parts ($k=m=0$) are equivalent (\cite{FeRu2004}).

\section{Polar curves and bi-Lipschitz invariants}

Taking into account the remark in the last paragraph, it is enough to consider
germs of commode quasi-homogeneous functions, i.e., germs of functions as
in  \eqref{eq_normal_form1}, with $k=m=0$ and $1\leq p < q.$ Here we
describe some bi-Lipschitz invariants of these polynomials.

\paragraph{\textbf{The polynomial }$Q_{\lambda}(t)$.}

Let 
\begin{equation}
f_{\lambda}(x,y)={\displaystyle\prod\limits_{j=1}^{n}}(y^{p}-\lambda_{j}x^{q}),
\label{eq18a}%
\end{equation}
where $p,q\in\mathbb{Z}_{+}$, $1\leq p<q$, $\gcd(p,q)=1$, and
$\lambda_{j}\in \mathbb C^{*}.$
Let $\partial_{y}f_{\lambda}=0$ be the polar curve with respect to the
direction $(0:1)\in\mathbb{P}^{1}$.

Now let $t=y^{p}/x^{q}$, then $f_{\lambda}(x,y)=x^{nq}Q_{\lambda}(t)$, where
$Q_{\lambda}(t)={\prod_{j=1}^{n}}(t-\lambda_{j})$. The following hold:

\begin{proposition}
\label{lambdas x kappas}

\begin{enumerate}
\item[a)] $f_{\lambda}(x,y)$ determines uniquely $Q_{\lambda}(t)$.
Reciprocally, for every pair of natural numbers $1\leq p < q$, $\gcd(p,q)=1$,
$Q_{\lambda}(t)$ determines a unique polynomial $f_{\lambda}(x,y)$;

\item[b)] Let $\kappa=(\kappa_{1},\cdots,\kappa_{n-1})$ be all the
(non-necessarily distinct) critical points of $Q_{\lambda}(t)$. Then
$Q_{\lambda}^{\prime}(t)=n{\prod_{j=1}^{n-1}}(t-\kappa_{j})$ and
\[
\partial_{y}f_{\lambda}(x,y)=pny^{p-1}{\prod\limits_{j=1}^{n-1}}(y^{p}%
-\kappa_{j}x^{q});
\]

\item[c)] Moreover, with $\kappa=(\kappa_{1},\cdots,\kappa_{n-1})$ one has:
\begin{equation}
\sigma_{\ell}(\kappa)=\frac{n-\ell}{n}\sigma_{\ell}(\lambda),\,\ell
=0,\ldots,n-1, \label{eq17}%
\end{equation}
where $\sigma_{\ell}$ is the elementary symmetric polynomial of degree $\ell$.
\end{enumerate}
\end{proposition}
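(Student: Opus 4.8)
The plan is to route all three parts through the single substitution $t = y^p/x^q$, under which $f_\lambda(x,y) = x^{nq}Q_\lambda(t)$, thereby reducing the two-variable statements to elementary one-variable facts about the polynomial $Q_\lambda$. For part (a), I would observe that the pairwise distinct numbers $\lambda_j$ are simultaneously the roots of $Q_\lambda(t) = \prod_{j=1}^n(t-\lambda_j)$ and the parameters appearing in $f_\lambda = \prod_{j=1}^n(y^p - \lambda_j x^q)$. Thus $f_\lambda$ determines the set $\{\lambda_j\}$ unambiguously (concretely, the substitution $t = y^p/x^q$ recovers $Q_\lambda$), and hence determines $Q_\lambda$. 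Conversely, once $p,q$ are fixed with $1\le p<q$ and $\gcd(p,q)=1$, the roots of $Q_\lambda$ are exactly the $\lambda_j$, and forming $\prod_{j=1}^n(y^p - \lambda_j x^q)$ yields the unique $f_\lambda$, since the multiset of factors is determined by the roots. In short, the correspondence $\lambda \leftrightarrow \{\lambda_j\} \leftrightarrow f_\lambda$ is a bijection.

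For part (b), the factorization $Q_\lambda'(t) = n\prod_{j=1}^{n-1}(t-\kappa_j)$ is immediate: $Q_\lambda$ is monic of degree $n$, so $Q_\lambda'$ has degree $n-1$ with leading coefficient $n$, and its roots are by definition the critical points $\kappa_j$ of $Q_\lambda$, counted with multiplicity (which is precisely why they need not be distinct). The formula for the polar curve then follows by differentiating $f_\lambda = x^{nq}Q_\lambda(y^p/x^q)$ via the chain rule: since $\partial_y(y^p/x^q) = p y^{p-1}/x^q$, one gets $\partial_y f_\lambda = p y^{p-1} x^{(n-1)q}\, Q_\lambda'(y^p/x^q)$. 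Substituting the factorization of $Q_\lambda'$ lets the prefactor $x^{(n-1)q}$ distribute as $\prod_{j=1}^{n-1} x^q$, clearing the denominator in each factor and producing $p n y^{p-1}\prod_{j=1}^{n-1}(y^p - \kappa_j x^q)$.

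For part (c), I would simply compare coefficients in the monomial basis. Expanding $Q_\lambda(t) = \sum_{\ell=0}^n (-1)^\ell \sigma_\ell(\lambda)\, t^{n-\ell}$ and differentiating gives $Q_\lambda'(t) = \sum_{\ell=0}^{n-1}(-1)^\ell(n-\ell)\sigma_\ell(\lambda)\, t^{n-1-\ell}$, whereas the factorization from part (b) gives $Q_\lambda'(t) = n\sum_{\ell=0}^{n-1}(-1)^\ell\sigma_\ell(\kappa)\, t^{n-1-\ell}$. Matching the coefficient of $t^{n-1-\ell}$ yields $(n-\ell)\sigma_\ell(\lambda) = n\,\sigma_\ell(\kappa)$, which is exactly \eqref{eq17}.

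None of the three parts presents a genuine difficulty; the only point requiring care is the bookkeeping of powers of $x$ in part (b), where one must check that the prefactor $x^{(n-1)q}$ matches precisely the $n-1$ denominators $x^{-q}$ arising from the substitution, so that the result is again a polynomial with no residual power of $x$. This is indeed the case, confirming that the polar curve has the stated form.
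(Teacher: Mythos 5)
Your proof is correct and follows essentially the same route as the paper: the substitution $t=y^{p}/x^{q}$ giving $f_{\lambda}=x^{nq}Q_{\lambda}(t)$ for part (a), the monic-degree count for the factorization of $Q_{\lambda}'$, and the expansion $Q_{\lambda}(t)=\sum_{j}(-1)^{j}\sigma_{j}(\lambda)t^{n-j}$ for parts (b) and (c). The only difference is that you write out explicitly the chain-rule and coefficient-matching computations that the paper leaves as ``follow immediately,'' which is a harmless (indeed helpful) elaboration.
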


\begin{proof}
Condition a) follows immediately from the equation $f_{\lambda}(x,y)=x^{nq}\cdot Q_{\lambda}(y^p/x^q)$. Now $Q_{\lambda
}(t)={\prod_{j=1}^{n}}(t-\lambda_{j})$ may also be written in terms of the
elementary symmetric polynomials $\sigma_{j}(\lambda)$ as $Q_{\lambda}%
(t)=\sum_{j=0}^{n}(-1)^{j}\sigma_{j}(\lambda)t^{n-j}$, $\sigma_{0}(\lambda
)=1$. Notice that $Q_{\lambda}^{\prime}(t)$ is a degree $n-1$ polynomial in
the variable $t$, thus we can write $Q_{\lambda}^{\prime}(t)=n{\prod
_{j=1}^{n-1}}(t-\kappa_{j})$. The
conditions (b) and (c) follow immediately from this.
\end{proof}

\paragraph{\textbf{The Henry-Parusi\'{n}ski invariants.}}

We keep the notation of Proposition \ref{lambdas x kappas} and order the
$\kappa_{i}$'s so that the ones equal to zero appear on places $r+1,\ldots
,n-1$. More precisely, the irreducible branches of the polar curve
$\partial_{y}f_{\lambda}(x,y)=0$ are given by $y^{p}-\kappa_{\ell}x^{q}=0$,
for $\ell=1,\ldots,r$; if $p>1$, one more branch is also given by $y=0$. Their
Puiseux parametrisations are given by, respectively, $\gamma_{\ell}%
(s)=(s^{p},\alpha_{\ell}s^{q})$, where $\alpha_{\ell}=\sqrt[p]{\kappa_{\ell}%
}\neq0$, and by $\gamma_{0}(s)=(s,0)$.

When $f$ is as in \eqref{eq18a}, the tangent cone of $f(x,y)=0$ contains only one singular line given by $y=0$. Notice that when $f$ is a
reduced homogeneous polynomial then the tangent cone of $f(x,y)=0$ contains
no singular line.

Recall from \cite[p. 225]{PaHe2003} that the Henry-Parusi\'{n}ski invariants
are obtained in the following way. For each polar arc $\gamma$ tangent to a
singular line of the tangent cone of $f(x,y)=0$, we associate two numbers:
$h=h(\gamma)\in\mathbb{Q}_{+}$ and $c=c(\gamma)\in\mathbb{C}^{\ast}$ given by
the expansions $f(\gamma(s))=cs^{h}+\ldots,\,c\neq0$. In particular, the
Henry-Parusi\'{n}ski invariants are not defined for reduced homogeneous germs.

Since $f$ is quasi-homogeneous, $h$ is determined by the weights, so we can
omit it in the definition of the invariants. The invariant of
Henry-Parusi\'{n}ski of a quasi-homogeneous $f$ is the set $\text{Inv}%
(f)=\frac{\{c_{0},c_{1},\ldots,c_{r}\}}{\mathbb{C}^{\ast}}$ for germs of type
$\mathrm{III}$ (i.e., $p>1$ in \eqref{eq18a})  and $\text{Inv}(f)=\frac{\{c_{1},\ldots,c_{r}%
\}}{\mathbb{C}^{\ast}}$ for germs of type $\mathrm{II}$, (i.e. $p=1$ in \eqref{eq18a}), where
$c_{j}$ is the coefficient of the leading term of $f(\gamma_{j}(s)).$ If
$p>1$, two sets $\{c_{0},c_{1},\ldots,c_{r}\}$ and $\{c_{1}^{\prime}%
,\ldots,c_{r}^{\prime},c_{0}^{\prime}\}$ define the same Henry-Parusi\'{n}ski
invariant if there exists $\xi\in\mathbb{C}^{\ast}$ such that
\begin{equation}
\{c_{0}^{\prime},c_{1}^{\prime},\ldots,c_{r}^{\prime}\}=\{c_{0}\xi^{qn}%
,c_{1}\xi^{pqn}\ldots,c_{r}\xi^{pqn}\}. \label{eq19}%
\end{equation}
If $p=1$, two sets $\{c_{1},c_{2},\ldots,c_{r}\}$ and $\{c_{1}^{\prime}%
,c_{2}^{\prime},\ldots,c_{r}^{\prime}\}$ define the same Henry-Parusi\'{n}ski
invariant if there exists $\xi\in\mathbb{C}^{\ast}$ such that
\begin{equation}
\{c_{1}^{\prime},c_{2}^{\prime},\ldots,c_{r}^{\prime}\}=\{c_{1}\xi^{qn}%
,c_{2}\xi^{qn},\ldots,c_{r}\xi^{qn}\}. \label{eq19bis}%
\end{equation}
We have $f\circ\gamma
_{0}(s)=(-1)^{n}\lambda_{1}\cdots\lambda_{n}s^{nq}$ (i.e., if $p>1$) and
$f\circ\gamma_{\ell}(s)=\left[  \prod\nolimits_{j=1}^{n}(\kappa_{\ell}%
-\lambda_{j})\right]  s^{pqn}$. Thus we define
\begin{equation}
\rho_{0}:=(-1)^{n}\lambda_{1}\cdots\lambda_{n}\text{ and }\rho_{\ell}%
:=\prod\nolimits_{j=1}^{n}(\kappa_{\ell}-\lambda_{j}). \label{eq18}%
\end{equation}

\begin{remark}
Although the polar arcs are not necessarily reduced, as we shall see in the
next paragraph, this is precisely what happens in the generic case.
\end{remark}

\paragraph{\textbf{The generic polynomial in two variables.}}

Let $\operatorname*{H}_{p,q}^{d}$ be the set of commode monic and reduced
quasi-homogeneous polynomials in two variables with relatively prime weights
$(p,q)$, $1\leq p<q$, and total degree $d$. Then $\operatorname*{H}_{p,q}^{d}$
is an affine space of dimension $n$. From Proposition \ref{lambdas x kappas},
there is an isomorphism $\operatorname*{H}\nolimits_{p,q}^{d}\simeq
\operatorname*{P}\nolimits_{1}^{n}$, where $\operatorname*{P}\nolimits_{1}%
^{n}$ is the affine space of monic polynomials of degree $n$, say $Q_{\sigma
}(t)=t^{n}+\sum_{j=1}^{n}(-1)^{j}\sigma_{j}t^{n-j}$. We write $\psi
:\operatorname*{H}\nolimits_{p,q}^{d}\overset{\simeq}{\longrightarrow
}\operatorname*{P}\nolimits_{1}^{n}$ and $\pi:\operatorname*{P}\nolimits_{1}%
^{n}\longrightarrow\mathbb{C}^{n}$, $\psi(f)=Q$ and $\pi(Q)=\sigma
:=(\sigma_{1},\cdots,\sigma_{n})$.

The following result holds true:

\begin{theorem}
There is a Zariski open set $\mathcal{Z}\subset\operatorname*{H}_{p,q}^{d}$
such that for every $f\in\mathcal{Z}$ the following holds:

\begin{enumerate}
\item[a)] the polar curve $\partial_{y}f_{x}=0$ has $n-1$ distinct roots;

\item[b)] For $\ell\neq\ell^{\prime}$, $\rho_{\ell}\neq\rho_{\ell^{\prime}},$
$1\leq\ell\leq n-1.$
\end{enumerate}
\end{theorem}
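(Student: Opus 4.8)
The plan is to read the theorem as two genericity statements on the irreducible variety $\operatorname{H}_{p,q}^d$ and to produce $\mathcal{Z}$ as an intersection of nonempty Zariski open sets. Via the isomorphism $\psi$ introduced above, $\operatorname{H}_{p,q}^d$ is identified with (a nonempty Zariski open subset of) the affine space $\operatorname{P}_1^n$ of monic polynomials $Q(t)=t^n+\sum_{j=1}^n(-1)^j\sigma_j t^{n-j}$, where $\operatorname{disc}(Q)\neq 0$ and $Q(0)\neq 0$ encode the reduced and commode conditions. In particular $\operatorname{H}_{p,q}^d$ is irreducible, so any finite intersection of nonempty Zariski open subsets is again nonempty and dense; it therefore suffices to show that a) and b) each define a nonempty Zariski open set.

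First I would dispose of the open-ness. By Proposition \ref{lambdas x kappas} the polar curve factors as $\partial_y f_\lambda=pny^{p-1}\prod_{j=1}^{n-1}(y^p-\kappa_j x^q)$, so its $n-1$ nontrivial branches are determined by the critical points $\kappa_1,\dots,\kappa_{n-1}$ of $Q$, i.e.\ by the roots of $Q'$; these are pairwise distinct exactly when $\operatorname{disc}(Q')\neq 0$, a polynomial in $\sigma$, whence a) is Zariski open. For b) the difficulty is that the $\kappa_\ell$ are not polynomial functions of $\sigma$. The key device is to encode the critical values $\rho_\ell=Q(\kappa_\ell)$ by means of the resultant
\[
R(u)=\operatorname{Res}_t\bigl(Q'(t),\,Q(t)-u\bigr)=n^{\,n}\prod_{\ell=1}^{n-1}\bigl(\rho_\ell-u\bigr),
\]
a polynomial of degree $n-1$ in $u$ whose coefficients are polynomials in $\sigma$. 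Then $\rho_1,\dots,\rho_{n-1}$ are pairwise distinct precisely when $\operatorname{disc}_u(R)\neq 0$, so b) is Zariski open as well; in the type \textrm{III} case $p>1$ one further separates $\rho_0=Q(0)=(-1)^n\sigma_n$ from the remaining critical values by the additional open condition $R(\rho_0)\neq 0$.

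The main point, and the only real obstacle, is to check that these open sets are nonempty, i.e.\ that $\operatorname{disc}(Q')$, $\operatorname{disc}_u(R)$ and $R(\rho_0)$ do not vanish identically on $\operatorname{H}_{p,q}^d$; for this a single well-chosen polynomial suffices. I would use the trinomial $Q(t)=t^n+at+b$ with $a\neq 0$: its critical points satisfy $n\kappa^{n-1}=-a$ and are therefore $n-1$ distinct nonzero numbers, and using $\kappa_\ell^{\,n}=-\tfrac{a}{n}\kappa_\ell$ one finds
\[
\rho_\ell=Q(\kappa_\ell)=\frac{a(n-1)}{n}\,\kappa_\ell+b.
\]
Thus the $\rho_\ell$ are obtained from the distinct $\kappa_\ell$ by an affine map of nonzero slope, hence are pairwise distinct, while $\rho_\ell-\rho_0=\tfrac{a(n-1)}{n}\kappa_\ell\neq 0$ separates them from $\rho_0=b$. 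Finally, for generic $a\neq 0,\ b\neq 0$ the trinomial has distinct roots and does not vanish at $0$, so it lies in $\operatorname{H}_{p,q}^d$ and simultaneously satisfies a) and b). This exhibits the required point and shows all three discriminants are nonzero polynomials; taking $\mathcal{Z}$ to be the common non-vanishing locus of $\operatorname{disc}(Q')$, $\operatorname{disc}_u(R)$ (and $R(\rho_0)$ when $p>1$) then yields the desired dense Zariski open set and completes the proof.
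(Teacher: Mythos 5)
Your proof is correct, and it follows the same underlying decomposition as the paper's: condition a) is the ``local'' genericity condition (distinct critical points of $Q$) and condition b) is the ``semi-local'' one (distinct critical values), and in both arguments $\mathcal{Z}$ is the complement of the union of these two loci. The difference is in how properness and algebraicity of the bad loci are established. The paper identifies $Q_\sigma$ as the versal unfolding of $t^n$, names the two loci as the local and semi-local pieces $B_L$ and $B_G$ of its bifurcation set, and simply invokes the standard fact that these are proper algebraic subsets of $\mathbb{C}^n$. You instead produce explicit algebraic certificates --- $\operatorname{disc}(Q')$ for a), and the discriminant in $u$ of the resultant $R(u)=\operatorname{Res}_t(Q'(t),Q(t)-u)=n^{\,n}\prod_{\ell}(\rho_\ell-u)$ for b), which correctly packages the critical values as polynomial data in $\sigma$ --- and then verify non-vanishing on the explicit trinomial $t^n+at+b$, where the computation $\rho_\ell=\tfrac{a(n-1)}{n}\kappa_\ell+b$ is a clean witness. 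Your version is more elementary and self-contained (it needs no input from unfolding theory and actually proves the properness the paper only asserts), at the cost of some computation; the paper's version is shorter and situates the strata inside the standard bifurcation-set stratification, which it uses again in the remark that follows. Your extra condition $R(\rho_0)\neq 0$ separating $\rho_0$ in the case $p>1$ goes slightly beyond what the statement requires (b) only concerns $1\leq\ell\leq n-1$), but it is harmless and in fact useful later for type III germs.
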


\begin{proof}
The family $Q_{\sigma}:=Q_{\sigma}(t)$ is the versal unfolding of
$Q_{0}(t)=t^{n}$. Therefore, as usual, we can define
\begin{enumerate}
\item $B_{L}:=\{\sigma\in\mathbb{C}^{n}:\exists t_{0}\in\mathbb{C}$ such that
$Q_{\sigma}^{\prime}(t_{0})=Q_{\sigma}^{\prime\prime}(t_{0})=0\}$;
\item $B_{G}:=\{\sigma\in\mathbb{C}^{n}:\exists t_{1},t_{2}\in\mathbb{C}$ such
that $Q_{\sigma}^{\prime}(t_{1})=Q_{\sigma}^{\prime}(t_{2})=0$ and $Q_{\sigma
}(t_{1})=Q_{\sigma}(t_{2})\}$;
\end{enumerate}
the local and semi-local subsets of the total bifurcation set $B=B_{L}\cup
B_{G}$ of $Q_{0}$. Since they are proper algebraic sets of $\mathbb{C}^{n}$,
we can take $Z_{0}=Z_{L}\cap Z_{G}$, where $Z_{L}$ is the Zariski open set
given by the complement $(\pi\circ\psi)^{-1}(B_{L})$ and $Z_{G}$ is the
complement $(\pi\circ\psi)^{-1}(B_{G})$. Polynomials in $Z_{L}$ satisfy a) and
those in $Z_{G}$ satisfy b). Then $Z_{0}$ satisfies the required conditions.
\end{proof}

\begin{remark}
Notice that $B\subset\mathbb{C}^{n}$ has a stratification which induces a
stratification in $(\pi\circ\psi)^{-1}(B)$.
\end{remark}

\section{Analytic moduli}

The analytic classification of quasi-homogeneous function-germs at the origin of $\mathbb{C}^{2}$
was given in \cite{Kang} and \cite{CaSc2011} (cf. Theorem \ref{CaSc}). We
revisit the analytic classification from \cite{CaSc2011} under the light of
bi-Lipschitz invariants.

Let $M$ be a manifold and $M_{\Delta}(n):=\{(x_{1},\cdots,x_{n})\in
M^{n}:x_{i}\neq x_{j}$ for all $i\neq j\}$. Let $S_{n}$ denote the group of
$n$ elements and consider its action on $M_{\Delta}(n)$ given by
$(\sigma,\lambda)\mapsto\sigma\cdot\lambda=(\lambda_{\sigma(1)},\cdots
,\lambda_{\sigma(n)})$. The quotient space induced by this action is denoted
by $\operatorname*{Symm}(M_{\Delta}(n))$. Now suppose a Lie group $G$ acts on
$M$ and let $G$ act on $M_{\Delta}(n)$ in the natural way $(g,\lambda
)=(g\cdot\lambda_{1},\cdots,g\cdot\lambda_{n})$ for every $\lambda\in
M_{\Delta}(n)$. Then the actions of $G$ and $S_{n}$ on $M_{\Delta}(n)$
commute. Thus we obtain a natural action of $G$ on $\operatorname*{Symm}%
(M_{\Delta}(n))$. Given $\lambda\in M_{\Delta}(n)$, denote its equivalence
class in $\operatorname*{Symm}(M_{\Delta}(n)))/G$ by $[\lambda]$.

Here we present some useful distinct characterizations of the analytic moduli
space of reduced quasi-homogeneous function-germs.

\paragraph{\textbf{The moduli space }$\mathcal{M}_{[n],1}^{0}$.}

Let $\mathcal{M}_{[n],1}^{0}$ denote the moduli space of analytically
equivalent punctured Riemann spheres with one marked puncture and $n$
unordered punctures, i.e., $\mathcal{M}_{[n],1}^{0}=\frac{\operatorname*{Symm}%
(\mathbb{C}_{\Delta}(n))}{\operatorname*{Aff}(\mathbb{C})}$. From Theorem
\ref{CaSc} this coincides with the moduli space of quasi-homogeneous functions
of type II. Now we give a suitable description of the space $\mathcal{M}%
_{[n],1}^{0}$.

Let $\mathbb{W}_{0}^{n-1}:=\{\lambda=(\lambda_{1},\cdots,\lambda_{n}%
)\in\operatorname*{Symm}(\mathbb{C}_{\Delta}(n)):\lambda_{1}+\cdots
+\lambda_{n}=0\}$ and consider the natural action of $\mathbb{C}^{\ast}$ on
$\mathbb{C}^{n}$ given by $(\lambda,z)\mapsto\lambda z=(\lambda z_{1}%
,\cdots,\lambda z_{n})$. Clearly, this action induces an analogous action on
$\mathbb{W}_{0}^{n-1}$. Their cosets induce the following isomorphism.

\begin{lemma}
The moduli space $\mathcal{M}_{[n],1}^{0}$ is isomorphic to $\mathbb{W}_{0}^{n-1}%
\slash \operatorname*{GL}(1,\mathbb{C})$.


\end{lemma}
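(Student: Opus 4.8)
The plan is to exploit the semidirect-product structure $\operatorname{Aff}(\mathbb{C}) = \operatorname{GL}(1,\mathbb{C}) \ltimes \mathbb{C}$, in which the normal subgroup $\mathbb{C}$ acts on $\mathbb{C}_{\Delta}(n)$ by simultaneous translation $\lambda_{j}\mapsto\lambda_{j}+b$ and the complementary factor $\operatorname{GL}(1,\mathbb{C})=\mathbb{C}^{\ast}$ acts by scaling $\lambda_{j}\mapsto a\lambda_{j}$. Both actions are applied coordinatewise, so they commute with the permutation action of $S_{n}$ and descend to $\operatorname{Symm}(\mathbb{C}_{\Delta}(n))$. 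Moreover the barycenter $b(\lambda):=\tfrac{1}{n}\sum_{j}\lambda_{j}$ is a symmetric function, hence well defined on $\operatorname{Symm}(\mathbb{C}_{\Delta}(n))$. The first step is to use the translation subgroup to produce a global slice.

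Concretely, for each class $[\lambda]\in\operatorname{Symm}(\mathbb{C}_{\Delta}(n))$ the translation by $-b(\lambda)$ carries it into $\mathbb{W}_{0}^{n-1}$, and since $b(\lambda+c)=b(\lambda)+c$ this is the \emph{unique} translate with vanishing barycenter. Thus $\mathbb{W}_{0}^{n-1}$ meets every translation orbit in exactly one point, i.e. it is a cross-section for the $\mathbb{C}$-action, and the quotient map $\operatorname{Symm}(\mathbb{C}_{\Delta}(n))\to\operatorname{Symm}(\mathbb{C}_{\Delta}(n))/\mathbb{C}$ restricts to an isomorphism onto $\mathbb{W}_{0}^{n-1}$. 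Note that the distinctness $\lambda_{i}\neq\lambda_{j}$ is preserved by translation, so the slice genuinely lies in $\operatorname{Symm}(\mathbb{C}_{\Delta}(n))$.

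It then remains to identify the residual action. Because scaling is linear it preserves the condition $\sum_{j}\lambda_{j}=0$, so the factor $\operatorname{GL}(1,\mathbb{C})$ acts on the slice $\mathbb{W}_{0}^{n-1}$; and since $\operatorname{GL}(1,\mathbb{C})$ normalizes the translation subgroup inside $\operatorname{Aff}(\mathbb{C})$, quotienting first by $\mathbb{C}$ and then by $\operatorname{Aff}(\mathbb{C})/\mathbb{C}=\operatorname{GL}(1,\mathbb{C})$ computes the full $\operatorname{Aff}(\mathbb{C})$-quotient (the standard quotient-in-stages). Combining the two reductions yields the natural identification
\[
\mathcal{M}_{[n],1}^{0}=\frac{\operatorname{Symm}(\mathbb{C}_{\Delta}(n))}{\operatorname{Aff}(\mathbb{C})}\;\cong\;\frac{\mathbb{W}_{0}^{n-1}}{\operatorname{GL}(1,\mathbb{C})}.
\]
The only point requiring care — and the mild obstacle — is checking that $\mathbb{W}_{0}^{n-1}$ is a clean enough slice that the induced map is an isomorphism of the relevant spaces and not merely a set-theoretic bijection; this follows because $b(\lambda)$ is an affine-linear, hence regular, coordinate transverse to the translation orbits, so the cross-section is smooth and the reduction in stages is an isomorphism in the algebraic (equivalently analytic) category.
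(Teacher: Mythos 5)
Your proof is correct and follows essentially the same route as the paper's: both normalize by subtracting the barycenter to land in the slice $\mathbb{W}_{0}^{n-1}$, and the key fact in each case is the equivariance $\Phi(a\lambda+b)=a\Phi(\lambda)$ (in your language, that $\operatorname{GL}(1,\mathbb{C})$ normalizes the translation subgroup and preserves the zero-barycenter slice). The paper simply packages this as an explicit induced map $\overline{\Phi}$ whose surjectivity and injectivity are checked directly, rather than invoking the quotient-in-stages formalism.
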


\begin{proof}
First notice that each class $[\lambda]\in\frac{\operatorname*{Symm}%
(\mathbb{C}_{\Delta}(n))}{\operatorname*{Aff}(\mathbb{C})}$ has a
representative $[\lambda_{0}]\in\mathbb{W}_{0}^{n-1}$, thus it is enough to
show that $[\lambda]=[\lambda^{\prime}]\in\frac{\operatorname*{Symm}%
(\mathbb{C}_{\Delta}(n))}{\operatorname*{Aff}(\mathbb{C})}$ if and only if
$[\lambda_{0}]=[\lambda_{0}^{\prime}]\in\frac{\mathbb{W}_{0}^{n-1}%
}{\operatorname*{GL}(1,\mathbb{C})}$. Let $\Phi:\operatorname*{Symm}%
(\mathbb{C}_{\Delta}(n))\longrightarrow\mathbb{W}_{0}^{n-1}$ be the map given
by $\lambda=(\lambda_{1},\cdots,\lambda_{n})\mapsto\lambda_{0}:=\lambda
-\frac{\lambda_{1}+\cdots+\lambda_{n}}{n}(1,\cdots,1)$. Then a straightforward
calculation shows that $\Phi(a\lambda+b)=a\Phi(\lambda)$, i.e., if
$\lambda^{\prime}=a\lambda+b$, then $\lambda_{0}^{\prime}=a\lambda_{0}$. Thus
$\Phi$ induces a well defined map $\overline{\Phi}:\frac{\operatorname*{Symm}%
(\mathbb{C}_{\Delta}(n))}{\operatorname*{Aff}(\mathbb{C})}\longrightarrow
\frac{\mathbb{W}_{0}^{n-1}}{\operatorname*{GL}(1,\mathbb{C})}$. This map is
clearly surjective, thus it suffices to show that it is injective. Now let
$\lambda,\lambda^{\prime}\in\operatorname*{Symm}(\mathbb{C}_{\Delta}(n))$ be
such that $\Phi(\lambda^{\prime})=a\Phi(\lambda)$, this implies the existence
of $b,b^{\prime}\in\mathbb{C}$ such that $\lambda^{\prime}-b^{\prime}%
(1,\cdots,1)=a(\lambda-b(1,\cdots,1))$. The result then follows.
\end{proof}

Now consider the above defined actions of $\mathbb{Z}_{n}$ on $\mathbb{C}^{m}$
and $\operatorname*{Symm}(\mathbb{C}^{m})$, and let $\mathbb{W}_{0,1}%
^{n-2}\subset\mathbb{W}_{0}^{n-1}$ be given by $\mathbb{W}_{0,1}%
^{n-2}=\{\lambda=(\lambda_{1},\cdots,\lambda_{n})\in\mathbb{W}_{0}%
^{n-1}:\lambda_{1}\cdots\lambda_{n}=1\}$, then we can reduce $\mathcal{M}^{0}_{[n],1}$ a bit further.

\begin{lemma}
The moduli space $\mathcal{M}_{n,[1]}^{0}$ is isomorphic to $\mathbb{W}_{0,1}^{n-2}\slash\mathbb{Z}_{n}$.

\end{lemma}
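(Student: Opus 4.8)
The plan is to realise $\mathbb{W}_{0,1}^{n-2}$ as a slice for the residual scaling action on $\mathbb{W}_0^{n-1}$, with the finite group $\mathbb{Z}_n$ appearing as the leftover ambiguity when one normalises along this slice. By the previous lemma we have $\mathcal{M}_{[n],1}^{0}\cong\mathbb{W}_0^{n-1}/\operatorname{GL}(1,\mathbb{C})$, where $\mathbb{C}^{\ast}=\operatorname{GL}(1,\mathbb{C})$ acts by $\lambda\mapsto a\lambda$, and where $\mathbb{Z}_n\subset\mathbb{C}^{\ast}$ is the group of $n$-th roots of unity acting on $\mathbb{W}_{0,1}^{n-2}$ by scalar multiplication. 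So it suffices to produce a biholomorphism $\mathbb{W}_0^{n-1}/\mathbb{C}^{\ast}\cong\mathbb{W}_{0,1}^{n-2}/\mathbb{Z}_n$.

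The main tool is the product function $P(\lambda)=\lambda_1\cdots\lambda_n$. Being symmetric it descends to $\operatorname{Symm}(\mathbb{C}_\Delta(n))$ and hence to $\mathbb{W}_0^{n-1}$, and under scaling it is homogeneous of degree $n$, i.e.\ $P(a\lambda)=a^{n}P(\lambda)$. Since $\sum_j\lambda_j=0$ is scaling-invariant, $\mathbb{C}^{\ast}$ genuinely acts on $\mathbb{W}_0^{n-1}$, and $\mathbb{W}_{0,1}^{n-2}$ is precisely the fibre $P^{-1}(1)$ inside it. First I would check that every orbit with $P(\lambda)\neq0$ meets the slice: choosing $a\in\mathbb{C}^{\ast}$ with $a^{n}=P(\lambda)^{-1}$ yields $P(a\lambda)=1$, so $a\lambda\in\mathbb{W}_{0,1}^{n-2}$.

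Next I would identify the residual isotropy. If $\mu,\mu'\in\mathbb{W}_{0,1}^{n-2}$ satisfy $\mu'=a\mu$ for some $a\in\mathbb{C}^{\ast}$, then applying $P$ and using $P(\mu)=P(\mu')=1$ forces $a^{n}=1$, i.e.\ $a\in\mathbb{Z}_n$; conversely an $n$-th root of unity preserves both $\sum_j\lambda_j=0$ and $P=1$, hence preserves $\mathbb{W}_{0,1}^{n-2}$. Therefore the slice meets each $\mathbb{C}^{\ast}$-orbit in exactly one $\mathbb{Z}_n$-orbit, so the composite $\mathbb{W}_{0,1}^{n-2}\hookrightarrow\mathbb{W}_0^{n-1}\to\mathbb{W}_0^{n-1}/\mathbb{C}^{\ast}$ descends to a bijection $\mathbb{W}_{0,1}^{n-2}/\mathbb{Z}_n\to\mathbb{W}_0^{n-1}/\mathbb{C}^{\ast}$. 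Holomorphy of the inverse is the point where the quotient by $\mathbb{Z}_n$ is essential: the normalisation $\lambda\mapsto a\lambda$ requires a branch of $a=P(\lambda)^{-1/n}$, which is multivalued exactly up to $\mathbb{Z}_n$, so it descends to a single-valued holomorphic map into $\mathbb{W}_{0,1}^{n-2}/\mathbb{Z}_n$.

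The step I expect to be the real obstacle is the behaviour on the locus $\{P=0\}$. This set is $\mathbb{C}^{\ast}$-invariant and, for $n\geq3$, nonempty (e.g.\ the orbit of $(1,-1,0,\dots,0)$), and its orbits never reach $\mathbb{W}_{0,1}^{n-2}$; thus the normalisation above is only defined on the Zariski-open dense stratum $P\neq0$. I would address this by establishing the isomorphism on this dense stratum and then arguing that $\{P=0\}$ is a proper closed subvariety whose contribution is matched on both sides once the quotients are read as coarse spaces, with the $P=0$ orbits treated as boundary points. Carrying out this final comparison rigorously — rather than the straightforward slice-and-isotropy computation above — is where the genuine care is needed.
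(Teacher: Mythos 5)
Your core computation is the one the paper itself uses: the printed proof defines $\Psi(\lambda)=\lambda/\sqrt[n]{\lambda_{1}\cdots\lambda_{n}}$ and verifies exactly your slice-and-isotropy argument, namely that $\lambda'=a\lambda$ forces $\overline{\lambda'}=\omega^{m}\overline{\lambda}$ for $\omega$ a primitive $n$-th root of unity, and conversely that $\overline{\lambda'}=\omega^{k}\overline{\lambda}$ forces $\lambda'=a\lambda$. That part of your write-up is correct, and your observation that the well-definedness of $P(\lambda)=\lambda_{1}\cdots\lambda_{n}$ on $\operatorname{Symm}(\mathbb{C}_{\Delta}(n))$ and its degree-$n$ homogeneity are the only inputs is exactly right.

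The obstacle you flag in your last paragraph is genuine --- and, for what it is worth, the paper's own proof passes over it in silence --- but the repair you sketch cannot work. For $n\geq 3$ the class of $(1,-1,0,\dots,0)$ is a legitimate point of $\mathbb{W}_{0}^{n-1}/\operatorname{GL}(1,\mathbb{C})$; since the translation normalising $\sum_{j}\lambda_{j}=0$ is unique and scaling preserves the vanishing of a coordinate, this class has \emph{no} representative in $\mathbb{W}_{0,1}^{n-2}$ at all. So the orbits in $\{P=0\}$ are not ``matched on both sides'': they are simply absent from $\mathbb{W}_{0,1}^{n-2}/\mathbb{Z}_{n}$, and no reading of the quotients as coarse spaces, nor any adjunction of boundary points, will recover them. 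The defensible statement is that $\overline{\Psi}$ is a bijection from the complement of the proper closed subvariety $\{\lambda_{1}\cdots\lambda_{n}=0\}$ in $\mathcal{M}_{[n],1}^{0}$ onto (the corresponding open part of) $\mathbb{W}_{0,1}^{n-2}/\mathbb{Z}_{n}$. Since everything downstream (Lemma \ref{Kang}, the fibre count in Theorem \ref{type II}) is generic and explicitly excludes degenerate fibres, this restricted version suffices; you should state it that way rather than spend effort trying to extend the isomorphism over $P=0$, where it is false.
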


\begin{proof}
Consider the map $\Psi:\mathbb{W}_{0}^{n}\longrightarrow\mathbb{W}_{0,1}%
^{n-2}$ given by $\lambda=(\lambda_{1},\cdots,\lambda_{n})\mapsto
\overline{\lambda}=\frac{1}{\sqrt[n]{\lambda_{1}\cdots\lambda_{n}}}\lambda$.
Similarly, it is enough to show that $[\lambda]=[\lambda^{\prime}]\in
\frac{\mathbb{W}_{0}^{n-1}}{\operatorname*{GL}(1,\mathbb{C})}$ if and only if
$\left[  \overline{\lambda}\right]  =\left[  \overline{\lambda^{\prime}%
}\right]  \in\frac{\mathbb{W}_{0,1}^{n-2}}{\mathbb{Z}_{n}}$. If $\lambda
^{\prime}=a\lambda$, then
\[
\overline{\lambda^{\prime}}=\frac{1}{\sqrt[n]{\lambda_{1}^{\prime}%
\cdots\lambda_{n}^{\prime}}}\lambda^{\prime}=\frac{a}{\omega^{-m}\cdot a}%
\frac{1}{\sqrt[n]{\lambda_{1}\cdots\lambda_{n}}}\lambda=\omega^{m}%
\cdot\overline{\lambda},
\]
where $\omega=\exp(2\pi\boldsymbol{i}/n)$. Thus $\Psi$ induces a map
$\overline{\Psi}:\frac{\mathbb{W}_{0}^{n-1}}{\operatorname*{GL}(1,\mathbb{C}%
)}\longrightarrow\frac{\mathbb{W}_{0,1}^{n-2}}{\mathbb{Z}_{n}}$. Clearly,
$\overline{\Psi}$ is surjective, thus it suffices to prove that it is
injective. Now notice that $\overline{\lambda^{\prime}}=\omega^{k}%
\overline{\lambda}$ implies
\[
\frac{\lambda^{\prime}}{\sqrt[n]{\lambda_{1}^{\prime}\cdots\lambda_{n}%
^{\prime}}}=\frac{\omega^{k}\lambda}{\sqrt[n]{\lambda_{1}\cdots\lambda_{n}}%
}\text{.}%
\]
But this is equivalent to say that $\lambda^{\prime}=a\lambda$ for some
$a\in\mathbb{C}$.
\end{proof}

Now recall the natural action $\mathbb{Z}_{n}$ on $\mathbb{C}^{m}$ given by
$\varsigma^{s}\mapsto(\varsigma^{s}\cdot\mu_{1},\varsigma^{s}\cdot\mu
_{2},\cdots\varsigma^{s}\cdot\mu_{m})$ with $\varsigma=\exp(\frac
{2\pi\boldsymbol{i}}{n})$, whose orbit space is denoted by $\frac
{\mathbb{C}^{m}}{\mathbb{Z}_{n}}$, then the previous lemma leads to the following isomorphism.

\begin{lemma}
\label{Kang} The map $\Xi:\mathbb{W}_{0,1}^{n-2}\longrightarrow\mathbb{W}%
_{0}^{n-2}$ given by $\lambda=(\lambda_{1},\cdots,\lambda_{n})\mapsto
\kappa=(\kappa_{1},\cdots,\kappa_{n})$, where $\sigma_{\ell}(\kappa
)=\frac{n-\ell}{n}\sigma_{\ell}(\lambda)$ for all $\ell=1,\ldots,n-1$, induces
the following isomorphism:%
\begin{equation}
\mathcal{M}_{[n],1}^{0}\simeq\frac{\mathbb{W}_{0}^{n-2}}{\mathbb{Z}_{n}}.
\label{eq29}%
\end{equation}

\end{lemma}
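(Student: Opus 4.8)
The plan is to realize $\Xi$ as a $\mathbb{Z}_n$-equivariant linear isomorphism at the level of elementary symmetric functions and then pass to quotients, composing with the preceding lemma. I identify each configuration in $\operatorname{Symm}(\mathbb{C}_\Delta(m))$ with its monic polynomial, i.e.\ with the tuple $(\sigma_1,\dots,\sigma_m)$ constrained by nonvanishing of the discriminant. Then $\mathbb{W}_{0,1}^{n-2}$ is cut out by $\sigma_1(\lambda)=0$ and $\sigma_n(\lambda)=1$ and is coordinatized by $(\sigma_2(\lambda),\dots,\sigma_{n-1}(\lambda))$, while $\mathbb{W}_0^{n-2}$, with $\kappa=(\kappa_1,\dots,\kappa_{n-1})$ the critical points of Proposition~\ref{lambdas x kappas}, is cut out by $\sigma_1(\kappa)=0$ and coordinatized by $(\sigma_2(\kappa),\dots,\sigma_{n-1}(\kappa))$.

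By Proposition~\ref{lambdas x kappas}(c), relation \eqref{eq17}, the map $\Xi$ acts on these coordinates by $\sigma_\ell(\kappa)=\tfrac{n-\ell}{n}\sigma_\ell(\lambda)$; the case $\ell=1$ shows $\Xi$ preserves the trace-zero condition, and for $2\le\ell\le n-1$ each factor $\tfrac{n-\ell}{n}$ is a nonzero constant, so on the ambient $\mathbb{C}^{n-2}$'s this is a diagonal linear isomorphism. In particular $\Xi$ is injective: if $\Xi(\lambda)=\Xi(\lambda')$ then $\sigma_\ell(\lambda)=\sigma_\ell(\lambda')$ for $\ell\le n-1$, and together with the common normalization $\sigma_n=1$ this forces $Q_\lambda=Q_{\lambda'}$, hence $\lambda=\lambda'$. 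Its set-theoretic inverse reconstructs $\lambda$ as the roots of $Q_\lambda(t)=t^n+\sum_{\ell=2}^{n-1}(-1)^\ell\tfrac{n}{n-\ell}\sigma_\ell(\kappa)\,t^{n-\ell}+(-1)^n$.

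Next I would verify $\mathbb{Z}_n$-equivariance so that $\Xi$ descends. With $\varsigma=\exp(2\pi\boldsymbol{i}/n)$ one has $Q_{\varsigma^s\lambda}(t)=\varsigma^{sn}Q_\lambda(\varsigma^{-s}t)$, so $Q'_{\varsigma^s\lambda}(t)=\varsigma^{s(n-1)}Q'_\lambda(\varsigma^{-s}t)$ and the critical points of $Q_{\varsigma^s\lambda}$ are $\varsigma^s\kappa_1,\dots,\varsigma^s\kappa_{n-1}$; thus $\Xi(\varsigma^s\lambda)=\varsigma^s\Xi(\lambda)$. Hence $\Xi$ intertwines the two scaling actions and induces $\overline\Xi\colon\mathbb{W}_{0,1}^{n-2}/\mathbb{Z}_n\to\mathbb{W}_0^{n-2}/\mathbb{Z}_n$, and composing with the preceding lemma's $\mathcal{M}_{[n],1}^0\simeq\mathbb{W}_{0,1}^{n-2}/\mathbb{Z}_n$ yields \eqref{eq29}.

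The step I expect to be the main obstacle is surjectivity, i.e.\ matching the reduced loci. Since the diagonal map $\Xi$ carries $\{\operatorname{disc}Q_\lambda=0\}$ to a hypersurface distinct from the discriminant locus of the $\kappa$-polynomial, these loci do not correspond naively: already for $n=3$ there are trace-zero $\kappa=(a,-a)$ with distinct entries for which the reconstructed $\lambda$ (normalized by $\sigma_n=1$) acquires a double root, and scaling by $\varsigma^s$ keeps the whole $\mathbb{Z}_n$-orbit on that bad set. Consequently $\overline\Xi$ is onto only if $\mathbb{W}_0^{n-2}$ is read as the space of critical configurations of reduced germs---equivalently, cut out by nonvanishing of the $\rho_\ell=Q_\lambda(\kappa_\ell)$ of \eqref{eq18} rather than by mere distinctness of the $\kappa_\ell$---in which case $\overline\Xi$ is tautologically surjective. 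Pinning down this as the correct target, so that \eqref{eq29} is an honest isomorphism, is where I would spend the most care.
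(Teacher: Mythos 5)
Your argument is essentially the paper's, which compresses your second paragraph into a single sentence: since the numbers $\frac{n}{n-\ell}\sigma_{\ell}(\kappa)$ are (up to sign) the coefficients of the monic degree-$n$ polynomial with constant term $(-1)^{n}$ whose roots are the $\lambda_{j}$, the map $\Xi$ is inverted by taking roots, hence descends to a bijection $\overline{\Xi}\colon\mathbb{W}_{0,1}^{n-2}/\mathbb{Z}_{n}\to\mathbb{W}_{0}^{n-2}/\mathbb{Z}_{n}$, which combined with the preceding lemma gives \eqref{eq29}. Your explicit verification of $\mathbb{Z}_{n}$-equivariance via $Q_{\varsigma^{s}\lambda}(t)=\varsigma^{sn}Q_{\lambda}(\varsigma^{-s}t)$ is left implicit in the paper but is exactly the right check. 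The surjectivity caveat in your final paragraph is genuine and is not addressed by the paper at all: as your $n=3$ computation shows (with $Q_{\lambda}(t)=t^{3}-3a^{2}t-1$ and discriminant $108a^{6}-27$), there are trace-zero tuples $\kappa$ with distinct entries whose reconstructed $\lambda$ has a repeated root, so $\Xi$ is a bijection of $\mathbb{W}_{0,1}^{n-2}$ onto its image rather than onto all of $\mathbb{W}_{0}^{n-2}$ as literally defined; the lemma must be read with the target replaced by $\Xi(\mathbb{W}_{0,1}^{n-2})$, i.e.\ the locus of critical configurations of reduced polynomials, precisely as you propose. With that reading your proof is complete, and is in fact more careful than the one in the paper.
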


\begin{proof}
Since $\frac{n}{n-\ell
}\sigma_{\ell}(\kappa)$, $\ell=1,\ldots,n-1$, are the coefficients of the
monic polynomial $z^{n}+\frac{n}{n-1}\sigma_{1}(\kappa)z^{n-1}+\cdots+\frac
{n}{n-(n-1)}\sigma_{n-1}(\kappa)z+1$ having $\{\lambda_{1},\cdots,\lambda
_{n}\}$ as roots, then $\Xi$ induces a bijective map $\overline{\Xi}%
:\frac{\mathbb{W}_{0,1}^{n-2}}{\mathbb{Z}_{n}}\longrightarrow\frac
{\mathbb{W}_{0}^{n-2}}{\mathbb{Z}_{n}}$.
\end{proof}

\paragraph{\textbf{The moduli space }$\mathcal{M}_{[n],2}^{0}$}

Let $\mathcal{M}_{[n],2}^{0}$ denote the moduli space of analytically
equivalent punctured Riemann spheres with two ordered punctures and $n$
unordered punctures, i.e., $\mathcal{M}_{[n],2}^{0}=\frac{\operatorname*{Symm}%
(\mathbb{C}_{\Delta}^{\ast}(n))}{\operatorname*{GL}(1,\mathbb{C})}$. From
Theorem \ref{CaSc}, this coincides with the moduli space of quasi-homogeneous
functions of type III. Let us give a suitable description of $\mathcal{M}%
_{[n],2}^{0}$.

On the analytic variety $\mathbb{V}^{n-1}=\{\lambda\in\mathbb{C}_{\Delta
}^{\ast}(n):\lambda_{1}\cdots\lambda_{n}=1\}$, we may consider the (effective)
action of $\mathbb{Z}_{n}$ given by the multiplication by the $n$-th roots of
unity, i.e., $\varsigma^{s}\mapsto\varsigma^{s}\cdot\lambda$ with
$\varsigma=\exp(\frac{2\pi\boldsymbol{i}}{n})$. Therefore, we have the
following isomorphism.

\begin{lemma}
\label{Kang2}Let $\mathbb{V}^{n-1}=\{\lambda\in\operatorname*{Symm}%
\mathbb{C}_{\Delta}^{\ast}(n):\lambda_{1}\cdots\lambda_{n}=1\}$, then%
\begin{equation}
\mathcal{M}_{[n],2}^{0}\simeq\frac{\mathbb{V}^{n-1}}{\mathbb{Z}_{n}}.
\label{eq23}%
\end{equation}

\end{lemma}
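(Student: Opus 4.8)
The plan is to mimic the normalization argument used in the earlier lemmas: exhibit $\mathbb{V}^{n-1}$ as a global slice for the scaling action of $\operatorname*{GL}(1,\mathbb{C})=\mathbb{C}^{\ast}$ on $\operatorname*{Symm}(\mathbb{C}_{\Delta}^{\ast}(n))$, and check that the residual ambiguity on this slice is exactly multiplication by $n$-th roots of unity. Recall that $\mathcal{M}_{[n],2}^{0}=\operatorname*{Symm}(\mathbb{C}_{\Delta}^{\ast}(n))/\operatorname*{GL}(1,\mathbb{C})$, where $a\in\mathbb{C}^{\ast}$ acts by $\lambda\mapsto a\lambda=(a\lambda_{1},\ldots,a\lambda_{n})$. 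Since every coordinate is nonzero, the product $\lambda_{1}\cdots\lambda_{n}\in\mathbb{C}^{\ast}$ is a well-defined function on $\operatorname*{Symm}(\mathbb{C}_{\Delta}^{\ast}(n))$, and under scaling it is multiplied by $a^{n}$. Hence each $\mathbb{C}^{\ast}$-orbit can be normalized to have product $1$, which is precisely the defining condition of $\mathbb{V}^{n-1}$.

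First I would define the normalization map $\Psi:\operatorname*{Symm}(\mathbb{C}_{\Delta}^{\ast}(n))\to\mathbb{V}^{n-1}$ by $\lambda\mapsto\overline{\lambda}:=\frac{1}{\sqrt[n]{\lambda_{1}\cdots\lambda_{n}}}\lambda$, with $\omega=\exp(2\pi\boldsymbol{i}/n)$. This is multivalued owing to the choice of $n$-th root, but any two choices differ by a factor $\omega^{k}$, so $\Psi$ is well defined as a map into the quotient $\mathbb{V}^{n-1}/\mathbb{Z}_{n}$. The core verification is that $\Psi$ is constant on $\mathbb{C}^{\ast}$-orbits: if $\lambda^{\prime}=a\lambda$ then $\lambda_{1}^{\prime}\cdots\lambda_{n}^{\prime}=a^{n}\lambda_{1}\cdots\lambda_{n}$, whence $\sqrt[n]{\lambda_{1}^{\prime}\cdots\lambda_{n}^{\prime}}=\omega^{m}a\,\sqrt[n]{\lambda_{1}\cdots\lambda_{n}}$ for a suitable $m$, and therefore
\[
\overline{\lambda^{\prime}}=\frac{1}{\sqrt[n]{\lambda_{1}^{\prime}\cdots\lambda_{n}^{\prime}}}\lambda^{\prime}=\frac{a}{\omega^{m}a}\frac{1}{\sqrt[n]{\lambda_{1}\cdots\lambda_{n}}}\lambda=\omega^{-m}\overline{\lambda}.
\]
Thus $\overline{\lambda^{\prime}}$ and $\overline{\lambda}$ lie in the same $\mathbb{Z}_{n}$-orbit, so $\Psi$ descends to a well-defined map $\overline{\Psi}:\mathcal{M}_{[n],2}^{0}\to\mathbb{V}^{n-1}/\mathbb{Z}_{n}$.

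Finally I would check that $\overline{\Psi}$ is a bijection. Surjectivity is immediate, since every $\lambda\in\mathbb{V}^{n-1}$ satisfies $\lambda_{1}\cdots\lambda_{n}=1$ and hence is its own normalization. For injectivity, suppose $\overline{\lambda^{\prime}}=\omega^{k}\overline{\lambda}$; writing this out gives $\frac{\lambda^{\prime}}{\sqrt[n]{\lambda_{1}^{\prime}\cdots\lambda_{n}^{\prime}}}=\frac{\omega^{k}\lambda}{\sqrt[n]{\lambda_{1}\cdots\lambda_{n}}}$, which says exactly that $\lambda^{\prime}=a\lambda$ with $a=\omega^{k}\sqrt[n]{\lambda_{1}^{\prime}\cdots\lambda_{n}^{\prime}}/\sqrt[n]{\lambda_{1}\cdots\lambda_{n}}\in\mathbb{C}^{\ast}$, i.e. $[\lambda]=[\lambda^{\prime}]$ in $\mathcal{M}_{[n],2}^{0}$. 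The only delicate point, and the step I expect to demand the most care, is the bookkeeping of the $n$-th-root ambiguity: one must confirm that the residual freedom after fixing the product collapses to exactly $\mathbb{Z}_{n}$, neither finer nor coarser. Everything else is formal once $\mathbb{V}^{n-1}$ is recognized as a global slice for the scaling action.
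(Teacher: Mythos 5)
Your proof is correct and follows essentially the same route as the paper's: normalize each orbit by dividing by an $n$-th root of the product $\lambda_{1}\cdots\lambda_{n}$, observe that the residual ambiguity (both from the choice of root and from scalings preserving the product) is exactly multiplication by $n$-th roots of unity, and verify that the induced map on quotients is a bijection. If anything, your explicit bookkeeping of the multivaluedness of $\sqrt[n]{\cdot}$ is slightly more careful than the paper's treatment.
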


\begin{proof}
The map $\Psi:\operatorname*{Symm}\mathbb{C}_{\Delta}^{\ast}(n)\longrightarrow
\mathbb{V}^{n-1}$ given by $\lambda=(\lambda_{1},\cdots,\lambda_{n}%
)\mapsto\overline{\lambda}=\frac{1}{\sqrt[n]{\lambda_{1}\cdots\lambda_{n}}%
}\lambda$ induces a well defined map $\widetilde{\Psi}:\frac
{\operatorname*{Symm}(\mathbb{C}_{\Delta}^{\ast}(n))}{\operatorname*{GL}%
(1,\mathbb{C})}\longrightarrow\frac{\mathbb{V}^{n-1}}{\mathbb{Z}_{n}}$. In
fact, $[\lambda^{\prime}],[\lambda]\in\mathbb{V}^{n-1}$ are equivalent in
$\operatorname*{Symm}(\mathbb{C}_{\Delta}^{\ast}(n))$ if and only if there is
$\alpha\in\mathbb{C}^{\ast}$ such that $\lambda^{\prime}=\alpha\lambda$. Since
$\lambda_{1}\cdots\lambda_{n}=1=\lambda_{1}^{\prime}\cdots\lambda_{n}^{\prime
}$, then $\alpha^{n}=1$. As before, it is enough to show that $[\lambda
]=[\lambda^{\prime}]\in\frac{\operatorname*{Symm}\mathbb{C}_{\Delta}^{\ast
}(n)}{\operatorname*{GL}(1,\mathbb{C})}$ if and only if $\left[
\overline{\lambda}\right]  =\left[  \overline{\lambda^{\prime}}\right]
\in\frac{\mathbb{V}^{n-1}}{\mathbb{Z}_{n}}$. Since $\Psi$ is clearly
surjective, we only have to prove that it is injective. In fact, suppose
$\left[  \overline{\lambda}\right]  =\left[  \overline{\lambda^{\prime}%
}\right]  \in\frac{\mathbb{V}^{n-1}}{\mathbb{Z}_{n}}$, then
\[
\frac{1}{\sqrt[n]{\lambda_{1}^{\prime}\cdots\lambda_{n}^{\prime}}}%
\lambda^{\prime}=\overline{\lambda^{\prime}}=\alpha\overline{\lambda}\frac
{1}{\sqrt[n]{\lambda_{1}\cdots\lambda_{n}}}\lambda,\quad\alpha^{n}=1.
\]
Therefore, $\lambda^{\prime}=a\lambda$ for some $a\in\mathbb{C}.$
\end{proof}

\section{Analytic moduli and bi-Lipschitz invariants}

In this section, we establish the relationship between the analytic moduli
space and the Henry-Parusi\'{n}ski invariant.

\subsection{Type II commode functions}

From (\ref{eq19}) and (\ref{eq18}), we obtain the correspondence%
\[
\kappa=(\kappa_{1},\cdots,\kappa_{n-1})\rightarrow\rho=(\rho_{1},\cdots
,\rho_{n-1}),
\]
where $\rho_{j}$ is given by $\rho_{\ell}=\prod\nolimits_{j=1}^{n}%
(\kappa_{\ell}-\lambda_{j})$. This correspondence is represented by the system
of equations%

\begin{equation}
\left\{
\begin{array}
[c]{l}%
(\kappa_{1})^{n}+\sum_{\ell=1}^{n-1}(-1)^{\ell}\frac{n}{n-\ell}\sigma_{\ell
}(\kappa)(\kappa_{1})^{n-\ell}+\left(  -1\right)  ^{n}\mu_{n}=\rho_{1},\\
\multicolumn{1}{c}{\vdots}\\
(\kappa_{n-1})^{n}+\sum_{\ell=1}^{n-1}(-1)^{\ell}\frac{n}{n-\ell}\sigma_{\ell
}(\kappa)(\kappa_{n-1})^{n-\ell}+\left(  -1\right)  ^{n}\mu_{n}=\rho_{n-1},
\end{array}
\right.  \label{eq3}%
\end{equation}
where $\mu_{n}=1$, and defines a map $\Upsilon:\mathbb{C}^{n-1}\longrightarrow
\mathbb{C}^{n-1}$, $\kappa_{j}\mapsto\rho_{j}$, $j=1,\ldots,n-1$.  It
follows from Lemma \ref{Kang} that in order to compare the analytic invariants
and the Henri-Parusi\'{n}ski invariants it suffices to consider the
restriction of $\Upsilon$ to the hyperplane $\mathbb{W}_{0}^{n-2}%
=\{(\kappa_{1},\cdots,\kappa_{n-1})\in\mathbb{C}^{n-1}:\kappa_{1}%
+\cdots+\kappa_{n-1}=0\}$. Notice that $\mathbb{W}_{0}^{n-2}$ is a manifold
admitting a system of coordinates given by $\overline{\kappa}=$ $(\kappa
_{1},\cdots,\kappa_{n-2})$. In particular, the last equation of (\ref{eq3})
determines immediately $\rho_{n-1}$ in terms of these $n-2$ parameters.
Therefore, $\operatorname{Im}(\Upsilon)$ may be considered as (an algebraic)
graph over $\mathbb{C}^{n-2}$ with coordinates given by $\overline{\rho}%
=(\rho_{1},\cdots,\rho_{n-2})$.

Denote the restriction of $\Upsilon$ to $\mathbb{W}_{0}^{n-2}$ by
$\Upsilon_{II}:=\left.  \Upsilon\right\vert _{\mathbb{W}_{0}^{n-2}}$, then in
the above systems of coordinates $\Upsilon_{II}=(\Upsilon_{II,1}%
,\cdots,\Upsilon_{II,\kappa-2})$ is given by%

\begin{equation}
\left\{
\begin{array}
[c]{l}%
\Upsilon_{II,1}(\kappa)=(\kappa_{1})^{n}+\sum_{\ell=1}^{n-1}(-1)^{\ell}%
\frac{n}{n-\ell}\sigma_{\ell}(\kappa)(\kappa_{1})^{n-\ell}+\left(  -1\right)
^{n},\\
\multicolumn{1}{c}{\vdots}\\
\Upsilon_{II,\kappa-2}(\kappa)=(\kappa_{n-2})^{n}+\sum_{\ell=1}^{n-1}%
(-1)^{\ell}\frac{n}{n-\ell}\sigma_{\ell}(\kappa)(\kappa_{n-2})^{n-\ell
}+\left(  -1\right)  ^{n}.
\end{array}
\right.  \label{eq3bis}%
\end{equation}

Let $\mathcal{HP}_{II}:=\operatorname{Im}(\left.  \Upsilon\right\vert
_{\mathbb{W}_{0}^{n-2}})$, then to each $\rho\in\mathcal{HP}_{II}$ there
corresponds a unique class of the Henry-Parusi\'{n}ski invariant of a type II
function germ. Besides, since $\mathbb{W}_{0}^{n-2}$ is an affine space of
dimension $n-2$, each generic point in the image of $\Upsilon$ admits
$n^{n-2}$ points in its pre-image (Bezout's theorem). From (\ref{eq3}) and
Lemma \ref{Kang}, $\Upsilon$ induces a map between the corresponding moduli
spaces, say $\overline{\Upsilon}_{II}:\mathcal{M}_{n,[1]}^{0}\longrightarrow
\mathcal{HP}_{II}$. In this case, each point $p\in\mathcal{HP}_{II}$ admits
precisely $n^{n-3}$ points in its pre-image (counting multiplicities) with
respect to $\overline{\Upsilon}_{II}$.

For any $\rho=(\rho_{1},\cdots,\rho_{n-2})\in\mathbb{C}^{n-2}$, we say that
$\Upsilon_{II}^{-1}(\rho)$ is a \textit{degenerate fiber} of $\Upsilon_{II}$
if ${\rho}_{j}=0$ for some $j\in\{1,\cdots,n-2\}$ or else if the fiber has a
multiple root; otherwise, we say that $\Upsilon_{II}^{-1}(\rho)$ is a
\textit{non-degenerate fiber}. We have

\begin{theorem}
\label{type II}For type II functions in $\operatorname*{H}_{pq}^{d}$, the
analytic moduli space with fixed Henry-Parusi\'{n}ski invariant is determined
by the equivalence classes in $\mathbb{W}_{0}^{n-2}$ of the non-degenerate
fibers of $\Upsilon_{II}:\mathbb{W}_{0}^{n-2}\rightarrow\mathcal{HP}_{II}$.
More precisely, for each $\rho\in\operatorname{Im}(\Upsilon_{II})$ there exist
$\#[\Upsilon_{II}^{-1}(\rho)]\leq n^{n-3}$ analytic types of function-germs with
the same Henry-Parusi\'{n}ski invariant $\rho$. The equality holds for generic
polynomials in $\mathcal{Z}\subset\operatorname*{H}_{pq}^{d}$.
\end{theorem}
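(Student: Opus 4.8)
The plan is to transport the problem to the parameter space $\mathbb{W}_0^{n-2}$ via Lemma \ref{Kang}, identify the Henry-Parusi\'{n}ski class with the value of $\Upsilon_{II}$, and then count $\mathbb{Z}_n$-orbits in the fibers. By Lemma \ref{Kang} the analytic moduli space $\mathcal{M}_{[n],1}^{0}$ of type II germs is isomorphic to $\mathbb{W}_0^{n-2}/\mathbb{Z}_n$, where a point of $\mathbb{W}_0^{n-2}$ is the ordered tuple $\kappa=(\kappa_1,\dots,\kappa_{n-1})$ of critical points of the normalized $Q_\lambda$ ($\sum_j\lambda_j=0$, $\prod_j\lambda_j=1$) and $\mathbb{Z}_n$ acts by $\kappa\mapsto\omega^s\kappa$, $\omega=\exp(2\pi\boldsymbol{i}/n)$. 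By (\ref{eq18}) one has $\rho_\ell=Q_\lambda(\kappa_\ell)=\Upsilon_{II,\ell}(\kappa)$, so the class in $\mathcal{HP}_{II}$ attached to $f$ is exactly $\Upsilon_{II}(\kappa)$; moreover it is $\mathbb{Z}_n$-invariant, since under $\kappa\mapsto\omega^s\kappa$ the roots $\lambda_j$ scale by $\omega^s$ and $\rho_\ell\mapsto\prod_j(\omega^s\kappa_\ell-\omega^s\lambda_j)=\omega^{sn}\rho_\ell=\rho_\ell$. Hence $\Upsilon_{II}$ descends to $\overline\Upsilon_{II}\colon\mathcal{M}_{[n],1}^{0}\to\mathcal{HP}_{II}$, and the analytic types sharing a fixed invariant $\rho$ are precisely the $\mathbb{Z}_n$-orbits in $\Upsilon_{II}^{-1}(\rho)$, namely $[\Upsilon_{II}^{-1}(\rho)]$; this settles the qualitative assertion.

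Next I would check that $\mathbb{Z}_n$ acts freely on every non-degenerate fiber. A point $\kappa$ fixed by $\omega^s$, $1\le s\le n-1$, satisfies $\omega^s\kappa_i=\kappa_i$ for all $i$, forcing $\kappa=0$; but (\ref{eq3bis}) gives $\Upsilon_{II}(0)=((-1)^n,\dots,(-1)^n)$, and since $\Upsilon_{II}-\Upsilon_{II}(0)$ is homogeneous of degree $n$ the origin is a multiple root of that fiber, which is therefore degenerate. Thus a non-degenerate fiber avoids the fixed locus; being $\mathbb{Z}_n$-stable (as $\rho$ is invariant), the action on it is free and $\#[\Upsilon_{II}^{-1}(\rho)]=\#\Upsilon_{II}^{-1}(\rho)/n$. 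Since each equation of (\ref{eq3bis}) has degree $n$ in the $n-2$ coordinates of $\mathbb{W}_0^{n-2}$, B\'ezout bounds $\#\Upsilon_{II}^{-1}(\rho)\le n^{n-2}$, giving $\#[\Upsilon_{II}^{-1}(\rho)]\le n^{n-3}$.

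Finally, for equality over $\mathcal{Z}=Z_L\cap Z_G$, membership forces $Q_\lambda$ to have $n-1$ distinct critical points and the $\rho_\ell$ to be pairwise distinct. Distinctness of the $\rho_\ell$ keeps $\rho$ off the degenerate locus and, crucially, guarantees that no two ordered solutions of (\ref{eq3bis}) differ by a permutation, so that fiber points correspond bijectively to analytic types before the $\mathbb{Z}_n$ quotient; for $\rho$ a regular value with no solutions at infinity the B\'ezout count is attained by $n^{n-2}$ simple points, and freeness then yields $\#[\Upsilon_{II}^{-1}(\rho)]=n^{n-3}$ for generic $f\in\mathcal{Z}$. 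The step I expect to be most delicate is precisely this compatibility bookkeeping: confirming that the two non-degeneracy conditions ($\rho_\ell\neq0$ and absence of a multiple root) simultaneously secure freeness of the $\mathbb{Z}_n$-action, the bijection between ordered fibers and unordered analytic types, and the attainment of the full B\'ezout count without escape of solutions to infinity.
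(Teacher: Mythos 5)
Your argument follows the same route as the paper's own proof: identify the type II moduli space with $\mathbb{W}_{0}^{n-2}/\mathbb{Z}_{n}$ via Lemma \ref{Kang}, realize the Henry-Parusi\'{n}ski invariant as the value of $\Upsilon_{II}$, and combine B\'ezout's bound $n^{n-2}$ with the $\mathbb{Z}_{n}$-quotient to get $n^{n-3}$. It is correct, and in fact more careful than the paper's two-line proof, since you verify the points the paper leaves implicit (freeness of the $\mathbb{Z}_{n}$-action away from the degenerate fiber through $\kappa=0$, and the permutation bookkeeping needed for the generic equality).
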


\begin{proof}
From the description of the space $\mathcal{M}_{[n],1}^{0}$ (cf. (\ref{eq29}))
and from (\ref{eq19bis}) and (\ref{eq18}), the map $\Upsilon_{II}%
:\mathbb{W}_{0}^{n-2}\longrightarrow\mathbb{C}^{n-2}$ induces a surjective map
$\overline{\Upsilon}_{II}:\mathcal{M}_{[n],1}^{0}\longrightarrow
\mathcal{HP}_{II}$. In other words, the correspondence between the
Henry-Parusi\'{n}ski invariant and the analytic invariants is determined by
the orbits of the action of $\mathbb{Z}_{n}$ on the fibers over
$\operatorname{Im}\Upsilon_{II}$ of the homogeneous map $\Upsilon
_{II}:\mathbb{W}_{0}^{n-2}\longrightarrow\mathbb{C}^{n-2}$, given by
$\Upsilon_{II}(\kappa)=(\Upsilon_{II,1}(\kappa),\cdots,\Upsilon_{II,n-2}%
(\kappa))$, where%
\begin{equation}
\Upsilon_{II,j}(\kappa):=(\kappa_{j})^{n}+\sum_{\ell=1}^{n-1}(-1)^{\ell}%
\frac{n}{n-\ell}\sigma_{\ell}(\kappa)(\kappa_{j})^{n-\ell}+(-1)^{n}%
,\label{eq26bis}%
\end{equation}
$j=1,\ldots,n-2$. Finally, Bezout's theorem and the action of $\mathbb{Z}_{n}$
on the fibers over $\operatorname{Im}\Upsilon_{II}$ lead to the desired result
\end{proof}

Let us see some examples. Since for $n=2$ there is trivially just one analytic
class (due to classical complex analysis arguments), then we shall only
consider $n\geq3$.

\begin{example}
Suppose $n=3$, then $\kappa=(\kappa_{1},\kappa_{2})$ and $\mu_{j}=\sigma
_{j}(\lambda)$. Since $\kappa\in\mathbb{W}_{0}^{n-2}$, then $\Upsilon_{II}$ is
given by%
\[
\left\{
\begin{array}
[c]{c}%
(\kappa_{1})^{3}+(-1)^{2}\frac{3}{3-2}\sigma_{2}(\kappa)\kappa_{1}=\rho
_{1}-(-1)^{3},\\
(\kappa_{2})^{3}+(-1)^{2}\frac{3}{3-2}\sigma_{2}(\kappa)\kappa_{2}=\rho
_{2}-(-1)^{3}.%
\end{array}
\right.
\]
Since $\kappa_{2}=-\kappa_{1}$, then we have%
\[
\left\{
\begin{array}
[c]{c}%
-2(\kappa_{1})^{3}=(\kappa_{1})^{3}+3(\kappa_{1}\kappa_{2})\kappa_{1}=\rho
_{1}+1\\
-2(\kappa_{2})^{3}=(\kappa_{2})^{3}+3(\kappa_{1}\kappa_{2})\kappa_{2}=\rho
_{2}+1
\end{array}
\right.  \Leftrightarrow\left\{
\begin{array}
[c]{c}%
(\kappa_{1})^{3}=-\frac{1+\rho_{1}}{2},\\
(\kappa_{2})^{3}=-\frac{\rho_{2}+1}{2}.
\end{array}
\right.
\]
In particular, $\rho_{2}=-\rho_{1}-2$. If we let $\alpha=\sqrt[3]{\frac
{1+\rho_{1}}{2}}$ be one of the cubic roots of $\frac{1+\rho_{1}}{2}$ and
$\omega=\exp(\frac{2\pi\boldsymbol{i}}{3})$, then%
\[
\left\{
\begin{array}
[c]{c}%
\kappa_{1}=-\omega^{s}\alpha,\\
\kappa_{2}=\omega^{s}\alpha.
\end{array}
\right.  s=0,1,2
\]
Therefore, there is just $3^{3-2}=1$ analytic class corresponding to the same
Henry-Parusi\'{n}ski invariant $\rho=(\rho_{1},-2-\rho_{1})$, where $\rho
_{1}\in\operatorname{Im}\Upsilon_{II}$.
\end{example}

\begin{example}
For $n=4$, the system (\ref{eq3}) assumes the form%
\[
\left\{
\begin{array}
[c]{l}%
(\kappa_{1})^{4}+\frac{4}{4-2}\sigma_{2}(\kappa)(\kappa_{1})^{4-2}-\frac
{4}{4-3}\sigma_{3}(\kappa)(\kappa_{1})^{4-3}=\rho_{1}-\left(  -1\right)
^{4},\\
(\kappa_{2})^{4}+\frac{4}{4-2}\sigma_{2}(\kappa)(\kappa_{2})^{4-2}-\frac
{4}{4-3}\sigma_{3}(\kappa)(\kappa_{2})^{4-3}=\rho_{2}-\left(  -1\right)
^{4},\\
(\kappa_{1})^{4}+\frac{4}{4-2}\sigma_{2}(\kappa)(\kappa_{1})^{4-2}-\frac
{4}{4-3}\sigma_{3}(\kappa)(\kappa_{1})^{4-3}=\rho_{3}-\left(  -1\right)  ^{4}.
\end{array}
\right.
\]
or equivalently%
\[
\left\{
\begin{array}
[c]{l}%
(\kappa_{1})^{4}+2(\kappa_{1}\kappa_{2}+\kappa_{1}\kappa_{3}+\kappa_{2}%
\kappa_{3})(\kappa_{1})^{2}-4\kappa_{1}\kappa_{2}\kappa_{3}(\kappa_{1}%
)=\rho_{1}-1,\\
(\kappa_{2})^{4}+2(\kappa_{1}\kappa_{2}+\kappa_{1}\kappa_{3}+\kappa_{2}%
\kappa_{3})(\kappa_{2})^{2}-4\kappa_{1}\kappa_{2}\kappa_{3}(\kappa_{2}%
)=\rho_{2}-1,\\
(\kappa_{3})^{4}+2(\kappa_{1}\kappa_{2}+\kappa_{1}\kappa_{3}+\kappa_{2}%
\kappa_{3})(\kappa_{3})^{2}-4\kappa_{1}\kappa_{2}\kappa_{3}(\kappa_{3}%
)=\rho_{3}-1.
\end{array}
\right.
\]
In other words%
\[
\left\{
\begin{array}
[c]{l}%
(\kappa_{1})^{2}[(\kappa_{1})^{2}+2(\kappa_{1}\kappa_{2}+\kappa_{1}\kappa
_{3}-\kappa_{2}\kappa_{3})]=\rho_{1}-1,\\
(\kappa_{2})^{2}[(\kappa_{2})^{2}+2(\kappa_{1}\kappa_{2}-\kappa_{1}\kappa
_{3}+\kappa_{2}\kappa_{3})]=\rho_{2}-1,\\
(\kappa_{3})^{2}[(\kappa_{3})^{2}+2(-\kappa_{1}\kappa_{2}+\kappa_{1}\kappa
_{3}+\kappa_{2}\kappa_{3})]=\rho_{3}-1.
\end{array}
\right.
\]
Assuming that $\kappa_{3}=-(\kappa_{1}+\kappa_{2})$, then we have%
\[
\left\{
\begin{array}
[c]{l}%
(\kappa_{1})^{2}[2(\kappa_{2})^{2}+2\kappa_{1}\kappa_{2}-(\kappa_{1}%
)^{2}]=\rho_{1}-1,\\
(\kappa_{2})^{2}[2(\kappa_{1})^{2}+2\kappa_{1}\kappa_{2}-(\kappa_{2}%
)^{2}]=\rho_{2}-1,\\
(\kappa_{1}+\kappa_{2})^{2}[3(\kappa_{1})^{2}+4\kappa_{1}\kappa_{2}%
+3\kappa_{2}{}^{2}]=\rho_{3}-1.
\end{array}
\right.
\]
For fixed $(\rho_{1},\rho_{2})\in\operatorname{Im}(\Upsilon_{II})$ the
classical Bezout's theorem says that there are precisely $4^{2}=16$ solutions
for the above system. Further, the last one tells us which value $\rho_{3}$
must have in order that $\rho=(\rho_{1},\rho_{2},\rho_{3})$ be the associated
Henri-Parusi\'{n}ski invariant. Considering the symmetries, this information
tells us that there are $4^{4-3}=\frac{16}{4}$ distinct analytic types of
quasi-homogeneous functions with the Henry-Parusi\'{n}ski invariant $\rho$.
\end{example}

\subsection{Type III commode functions}

From (\ref{eq19}) and (\ref{eq18}), we obtain the correspondence%
\[
\kappa=(\kappa_{1},\cdots,\kappa_{n-1})\rightarrow\rho=(\rho_{1},\cdots
,\rho_{n-1}),
\]
where $\rho_{0}=(-1)^{n}\lambda_{1}\cdots\lambda_{n}$ and $\rho_{\ell}%
=\prod\nolimits_{j=1}^{n}(\kappa_{\ell}-\lambda_{j})$ for $\ell>0$. This
correspondence is represented by the map $\Upsilon_{III}:\mathbb{C}%
^{n-1}\rightarrow\mathbb{C}^{n-1}$ , $\kappa\mapsto c=(\frac{\rho_{1}}%
{\rho_{0}},\cdots,\frac{\rho_{n-1}}{\rho_{0}})$, given by%
\[
\left\{
\begin{array}
[c]{l}%
(\kappa_{1})^{n}+\sum_{\ell=1}^{n-1}(-1)^{\ell}\frac{n}{n-\ell}\sigma_{\ell
}(\kappa)(\kappa_{1})^{n-\ell}+(-1)^{n}=(-1)^{n}c_{1},\\
\multicolumn{1}{c}{\vdots}\\
(\kappa_{n-1})^{n}+\sum_{\ell=1}^{n-1}(-1)^{\ell}\frac{n}{n-\ell}\sigma_{\ell
}(\kappa)(\kappa_{n-1})^{n-\ell}+(-1)^{n}=(-1)^{n}c_{n-1},
\end{array}
\right.
\]
where%
\[
\lambda_{1}\cdots\lambda_{n}=1\text{ and }c_{\ell}:=\frac{\rho_{\ell}}%
{\rho_{0}}=\frac{(\kappa_{\ell}-\lambda_{1})\cdots(\kappa_{\ell}-\lambda_{n}%
)}{(-1)^{n}\lambda_{1}\cdots\lambda_{n}}\neq0.
\]
From Lemma \ref{Kang2}, each $c=(c_{1},\cdots,c_{n-1})\in\mathcal{HP}%
_{III}=\operatorname{Im}(\Upsilon_{III})$ corresponds to only one
Henry-Parusi\'{n}ski invariant and thus induce a map from the analytic moduli
space to the Henry-Parusi\'{n}ski invariant, say $\overline{\Upsilon}%
_{III}:\mathcal{M}_{[n],2}^{0}\longrightarrow\mathcal{HP}_{III}$.

As before, let $c\in\mathcal{HP}_{III}$, then we say that $\Upsilon_{III}%
^{-1}(c)$ is a \textit{degenerate fiber} of $\Upsilon_{III}$ if $c_{j}=0$ for
some $j\in\{1,\cdots,n\}$ or else if the fiber has a multiply root; otherwise
we shall say that $\Upsilon_{III}^{-1}(c)$ is a \textit{non-degenerate fiber}.

\begin{theorem}
\label{type III}For type III functions in $\operatorname*{H}_{pq}^{d}$, the analytic moduli space with fixed
Henry-Parusi\'{n}ski invariant is determined by the images in $\frac
{\mathbb{V}^{n-1}}{\mathbb{Z}_{n}}$ of non-degenerate fibers of $\Upsilon
_{III}:\mathbb{V}^{n-1}\rightarrow\mathbb{C}^{n-1}$. More precisely, for each
$c\in\mathcal{HP}_{III}$
there exist $\#[\Upsilon_{III}^{-1}(c)]\leq n^{n-2}$ analytic types of function-germs
with the same Henry-Parusi\'{n}ski invariant $c$. The equality holds for
generic polynomials in $\mathcal{Z}\subset\operatorname*{H}_{pq}^{d}$.
\end{theorem}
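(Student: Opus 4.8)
The plan is to transcribe the proof of Theorem~\ref{type II} into the type III setting, using the isomorphism $\mathcal{M}_{[n],2}^{0}\simeq\mathbb{V}^{n-1}/\mathbb{Z}_{n}$ of Lemma~\ref{Kang2} in place of (\ref{eq29}), and the normalization $\lambda_{1}\cdots\lambda_{n}=1$ in place of $\sum_{j}\lambda_{j}=0$. First I would check that $\Upsilon_{III}$ descends to the moduli space. Writing the map in the critical coordinates $\kappa=(\kappa_{1},\ldots,\kappa_{n-1})$, one has $\rho_{\ell}=Q_{\lambda}(\kappa_{\ell})=\prod_{j}(\kappa_{\ell}-\lambda_{j})$ and $\rho_{0}=(-1)^{n}\prod_{j}\lambda_{j}=(-1)^{n}$. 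The action $\lambda\mapsto\varsigma^{s}\lambda$ rescales the critical points by $\kappa_{\ell}\mapsto\varsigma^{s}\kappa_{\ell}$, since the relation $\sigma_{\ell}(\kappa)=\frac{n-\ell}{n}\sigma_{\ell}(\lambda)$ makes $\sigma_{\ell}(\kappa)$ pick up the factor $\varsigma^{s\ell}$; because $\varsigma^{sn}=1$ and $\prod_{j}\lambda_{j}=1$ on $\mathbb{V}^{n-1}$, every $\rho_{\ell}$ and $\rho_{0}$ is left unchanged, hence so is each $c_{\ell}=\rho_{\ell}/\rho_{0}$. Therefore, via (\ref{eq19}), (\ref{eq18}) and (\ref{eq23}), the map $\Upsilon_{III}$ factors through a surjective map $\overline{\Upsilon}_{III}:\mathcal{M}_{[n],2}^{0}\rightarrow\mathcal{HP}_{III}$.

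Next I would count the preimages. Unlike the type II case, the normalization $\prod_{j}\lambda_{j}=1$ imposes no linear relation on the critical points, so $\kappa$ ranges freely over $\mathbb{C}^{n-1}$ and each of the $n-1$ defining equations of $\Upsilon_{III}$ is a polynomial of total degree $n$ in these $n-1$ variables. By Bezout's theorem a generic fiber $\Upsilon_{III}^{-1}(c)$ consists of exactly $n^{n-1}$ points. Since $c$ is constant along $\mathbb{Z}_{n}$-orbits by the previous step, the scaling $\kappa\mapsto\varsigma^{s}\kappa$ permutes these preimages; on a non-degenerate fiber it acts freely, for a fixed point of a nontrivial $\varsigma^{s}$ would force $\kappa_{\ell}=0$ for every $\ell$, a configuration with coincident critical points (a non-reduced polar curve) that is excluded. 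The $n^{n-1}$ points thus split into $n^{n-1}/n=n^{n-2}$ orbits, and by Lemma~\ref{Kang2} each orbit is a single class in $\mathcal{M}_{[n],2}^{0}$. This produces precisely $n^{n-2}$ distinct analytic types over the invariant $c$ whenever $f\in\mathcal{Z}$.

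For an arbitrary $c\in\mathcal{HP}_{III}$ the same reasoning yields only the inequality $\#[\Upsilon_{III}^{-1}(c)]\leq n^{n-2}$: over the degenerate locus the fiber may carry coincident points (counted once as analytic types) or a vanishing coordinate $c_{j}=0$, and the $\mathbb{Z}_{n}$-action may acquire fixed points, so the number of orbits can only drop. Consequently the analytic types sharing a fixed Henry-Parusi\'{n}ski invariant are exactly the $\mathbb{Z}_{n}$-classes in $\mathbb{V}^{n-1}$ of the non-degenerate fibers of $\Upsilon_{III}$, as claimed.

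The step I expect to be the main obstacle is securing the exact count $n^{n-1}$ together with the freeness of the $\mathbb{Z}_{n}$-action on the generic locus $\mathcal{Z}$: one must verify that for generic $c$ the fiber neither acquires excess intersection nor loses points at infinity, so that the Bezout bound is actually attained, and simultaneously that the scaling has no fixed points there, so that the division by $n$ is exact. The remainder is a direct transcription of the type II argument, and away from $\mathcal{Z}$ only the inequality survives.
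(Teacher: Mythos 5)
Your proposal is correct and follows essentially the same route as the paper: descend $\Upsilon_{III}$ to $\mathcal{M}_{[n],2}^{0}\simeq\mathbb{V}^{n-1}/\mathbb{Z}_{n}$ via Lemma~\ref{Kang2}, apply Bezout to the $n-1$ degree-$n$ equations in the $\kappa$-coordinates (now unconstrained in $\mathbb{C}^{n-1}$, giving $n^{n-1}$ generic solutions), and quotient by the $\mathbb{Z}_{n}$-action on the fibers to obtain $n^{n-2}$. The paper's own proof is in fact terser, leaving the Bezout count and the freeness of the action implicit by analogy with Theorem~\ref{type II}, so your write-up supplies strictly more detail along the same lines.
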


\begin{proof}
From the description of the space $\mathcal{M}_{[n],2}^{0}$ (cf. (\ref{eq23}))
and also from (\ref{eq19}) and (\ref{eq18}), the correspondence between the
Henry-Parusi\'{n}ski invariant and the analytic invariants is determined by
the orbits of the action of $\mathbb{Z}_{n}$ on the fibers over $\mathcal{HP}%
_{III}$
of the map $\Upsilon_{III}:\mathbb{C}^{n-1}\longrightarrow\mathbb{C}^{n-1}$
$\ $given in coordinates by $\Upsilon_{III}(\kappa)=(\Upsilon_{1}%
(\kappa),\cdots,\Upsilon_{n-1}(\kappa))$, where%
\begin{equation}
\Upsilon_{j}(\kappa):=(\kappa_{j})^{n}+\sum_{\ell=1}^{n-1}(-1)^{\ell}\frac
{n}{n-\ell}\sigma_{\ell}(\kappa)(\kappa_{j})^{n-\ell}.\label{eq26}%
\end{equation}
\end{proof}
As a consequence of the above theorems, we are able to
generalize the main result in \cite{FeRu2011} as follows.

\begin{theorem}
\label{cont. family}Let $f_{t}$ be a continuous
family of germs of quasi-homogeneous (and not homogeneous) functions with
isolated singularity and constant Henry-Parusi\'{n}ski invariant then $f_{t}$
is analytically trivial.
\end{theorem}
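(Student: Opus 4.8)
The plan is to derive analytic triviality from the \emph{finiteness} of analytic types sharing a fixed Henry-Parusi\'{n}ski invariant (the content of Theorems \ref{type II} and \ref{type III}), combined with a connectedness argument. First I would record that the discrete data of the family are constant: the weights $(p,q)$ and the number $n$ of branches are analytic invariants, hence locally constant along the connected family $f_t$, so every member has the same type, say type II (the type III case being verbatim the same with $\mathbb{V}^{n-1}$ and $\operatorname{GL}(1,\mathbb{C})$ replacing $\mathbb{W}_{0}^{n-2}$ and $\operatorname{Aff}(\mathbb{C})$). Since each $f_t$ has isolated singularity the $Q_{\lambda(t)}$ are reduced, so by the reduction to commode parts adopted throughout (the remark closing Section \ref{prelim.} and the opening of Section 2) I may assume $f_t(x,y)=\prod_{j=1}^{n}(y^{p}-\lambda_j(t)x^{q})$ with the $\lambda_j(t)$ pairwise distinct for every $t$. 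Because the base is a connected interval and the roots never collide, they may be labelled as $n$ continuous functions $\lambda_1(t),\dots,\lambda_n(t)$.

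Next I would invoke Theorem \ref{type II}, which asserts that every $\rho\in\mathcal{HP}_{II}$ is the image, under the induced map $\overline{\Upsilon}_{II}\colon\mathcal{M}_{[n],1}^{0}\to\mathcal{HP}_{II}$, of at most $n^{n-3}$ analytic types; that is, each fibre of $\overline{\Upsilon}_{II}$ is finite. Continuity of the family makes the symmetric functions $\sigma_\ell(\lambda(t))$ continuous, hence $t\mapsto[\lambda(t)]$ is a continuous path in the moduli space $\mathcal{M}_{[n],1}^{0}$, and the hypothesis that $\operatorname{Inv}(f_t)$ is constant says exactly that $\overline{\Upsilon}_{II}([\lambda(t)])$ is constant, so this path is confined to one finite fibre. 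A finite subset of the (Hausdorff) moduli space is discrete, so a continuous path with connected domain that lands in it is constant. Hence $[\lambda(t)]=[\lambda(0)]$ for all $t$, and every $f_t$ is analytically equivalent to $f_0$. The type III case is identical, using Theorem \ref{type III}, $\mathcal{M}_{[n],2}^{0}$, and the bound $n^{n-2}$.

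It remains to promote ``constant analytic type'' to an honest analytic trivialisation, and this is the step I expect to require the most care. Constancy of $[\lambda(t)]$ yields, for each $t$, an affine map $A_t(z)=a_t z+b_t$ and a permutation $\tau_t$ with $\lambda_j(t)=A_t(\lambda_{\tau_t(j)}(0))$ for all $j$; by continuity, connectedness and the distinctness of the roots the permutation is locally constant, hence trivial since $\tau_0=\operatorname{id}$, and as two distinct values $\lambda_i(0)\neq\lambda_j(0)$ already pin down an affine map, $a_t$ and $b_t$ are continuous in $t$ with $a_0=1,\ b_0=0$. The action $z\mapsto a z+b$ on the roots is realised by the explicit germ $\Phi_t(x,y)=(a_t^{1/q}x,\,y-b_t x^{q})\in\operatorname{Diff}(\mathbb{C}^2,0)$ (choosing the branch of $a_t^{1/q}$ with $a_0^{1/q}=1$), for which one checks directly that $f_0\circ\Phi_t=f_t$; in the type III case the $\operatorname{GL}(1,\mathbb{C})$-scaling is realised analogously by $(x,y)\mapsto(a_t^{1/q}x,y)$. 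Thus the $\Phi_t$ assemble into a continuous family in $\operatorname{Diff}(\mathbb{C}^2,0)$ with $\Phi_0=\operatorname{id}$ and $f_t=f_0\circ\Phi_t$, which is the asserted analytic triviality. The only genuine obstacle is this continuous (indeed analytic) choice of trivialising coordinate change: it succeeds precisely because the relevant isotropy is discrete and the group acts through explicit quasi-homogeneous substitutions, so that a path inside a single orbit lifts continuously to the acting group.
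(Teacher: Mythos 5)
Your proposal is correct and follows essentially the same route as the paper: both arguments rest on the fact that the induced map $\overline{\Upsilon}$ from the analytic moduli space to the space of Henry--Parusi\'{n}ski invariants has discrete (indeed finite, by Theorems \ref{type II} and \ref{type III}) fibres, so a continuous path of analytic classes with constant invariant is trapped in one fibre and must be constant by connectedness. The only difference is that you go on to lift the constant class to an explicit continuous family $\Phi_{t}\in\operatorname{Diff}(\mathbb{C}^{2},0)$ realizing the trivialization, a step the paper leaves implicit after concluding that $\left[\overline{\Upsilon}\right]^{-1}(\rho)$ is discrete.
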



\begin{proof}
First recall from \cite{FeRu2011} that it suffices to consider the commode
quasi-homogeneous case. Let $H_{p,q}^{d}$ denote the set of commode
quasi-homogeneous polynomials with weights $(p,q)$ and quasi-homogeneous
degree $d$. Let $\left[  H_{p,q}^{d}\right]  $ denote the set of analytic
conjugacy classes in $H_{p,q}^{d}$ and $\pi:H_{p,q}^{d}\longrightarrow\left[
H_{p,q}^{d}\right]  $ the natural projection, then we have the commutative
diagram%
\[%
\xymatrix{
& H_{p,q}^{d} \ar[d]^{\pi} \ar[rd]^{\Upsilon} \\
[0,1] \ar[ur]^{f_t}  \ar[r] & [H_{p,q}^{d}] \ar[r]^{\overline{\Upsilon}}
& \mathcal{HP}}%
\]
where $\Upsilon$ is the map in the statement of Theorems \ref{type II} and
\ref{type III}, and $\mathcal{HP}$ the space of Henry-Parusi\'{n}ski
invariants.
Suppose that $f_{t}$ is a continuous family with constant Henry-Parusi\'{n}ski
invariant $\rho$. Then $\pi\circ f_{t}$ is a continuous family of analytic
classes contained in $\left[  \overline{\Upsilon}\right]  ^{-1}(\rho)$. Since
$\left[  \overline{\Upsilon}\right]  $ is a ramified covering, then $\left[
\overline{\Upsilon}\right]  ^{-1}(\rho)$ is discrete and the result follows.
\end{proof}

\section{Examples}

Now let us study some examples.

\begin{example}
Suppose $n=2$, then $\kappa=\kappa_{1}$ and $\mu_{1}=\sigma_{1}(\lambda
)=\frac{2}{2-1}\sigma_{1}(\kappa)=2\kappa_{1}$. Thus%
\[
(\kappa_{1})^{2}+(-1)^{1}(2\kappa_{1})(\kappa_{1})^{2-1}=c_{1}%
-1\Longleftrightarrow(\kappa_{1})^{2}=1-c_{1}.
\]
Equivalently,%
\[
\kappa_{1}=\pm\sqrt{1-c_{1}.}%
\]
Since the square roots of unity are given by $\pm1$, then the equivalence
(\ref{eq23}) ensures that both points correspond to just one analytic class
$\left[  \lambda\right]  $.
\end{example}

\begin{example}
Suppose $n=3$, then $\kappa=(\kappa_{1},\kappa_{2})$ and $\mu_{j}=\sigma
_{j}(\lambda)$. Hence%
\[
\left\{
\begin{array}
[c]{c}%
(\kappa_{1})^{3}+(-1)^{1}\frac{3}{3-1}\sigma_{1}(\kappa)(\kappa_{1}%
)^{2}+(-1)^{2}\frac{3}{3-2}\sigma_{2}(\kappa)\kappa_{1}=c_{1}-1,\\
(\kappa_{2})^{3}+(-1)^{1}\frac{3}{3-1}\sigma_{1}(\kappa)(\kappa_{2}%
)^{2}+(-1)^{2}\frac{3}{3-2}\sigma_{2}(\kappa)\kappa_{2}=c_{2}-1,
\end{array}
\right.
\]
or equivalently%
\[
\left\{
\begin{array}
[c]{c}%
(\kappa_{1})^{3}-\frac{3}{2}(\kappa_{1}+\kappa_{2})(\kappa_{1})^{2}%
+3(\kappa_{1}\kappa_{2})\kappa_{1}=c_{1}-1,\\
(\kappa_{2})^{3}-\frac{3}{2}(\kappa_{1}+\kappa_{2})(\kappa_{2})^{2}%
+3(\kappa_{1}\kappa_{2})\kappa_{2}=c_{2}-1.
\end{array}
\right.
\]
In other words,%
\[
\left\{
\begin{array}
[c]{c}%
-\frac{1}{2}(\kappa_{1})^{3}+\frac{3}{2}\kappa_{2}(\kappa_{1})^{2}=c_{1}-1,\\
-\frac{1}{2}(\kappa_{2})^{3}+\frac{3}{2}\kappa_{1}(\kappa_{2})^{2}=c_{2}-1.
\end{array}
\right.
\]
Multiplying the first equation by $-2$ and summing it to the second one
multiplied by $2$, we have%
\[
(\kappa_{1}-\kappa_{2})^{3}=2(c_{2}-c_{1}).
\]
Let $\alpha$ be a cubic root of $2(c_{2}-c_{1})$ and $\omega=\exp
(2\pi\boldsymbol{i}/3)$, then the cubic roots of $2(c_{2}-c_{1})$ are of the
form $\omega\alpha,\omega^{2}\alpha,\omega^{3}\alpha=\alpha$. Hence
$\kappa_{1}-\kappa_{2}=\omega^{s}\alpha$, $s=0,1,2.$ Back to the system of
equations, we have%
\begin{align*}
2(1-c_{1})  &  =(\kappa_{1})^{3}-3(\kappa_{1})^{2}\kappa_{2}=(\kappa_{1}%
)^{2}(\kappa_{1}-3\kappa_{2})=(\kappa_{1})^{2}[3(\kappa_{1}-\kappa
_{2})-2\kappa_{1}]\\
&  =(\kappa_{1})^{2}[3\omega^{s}\alpha-2\kappa_{1}]=-2(\kappa_{1})^{3}%
+3\omega^{s}\alpha(\kappa_{1})^{2},
\end{align*}
i.e.,%
\begin{equation}
(\kappa_{1})^{3}-\frac{3\alpha\omega^{s}}{2}(\kappa_{1})^{2}+(1-c_{1})=0.
\label{eq27}%
\end{equation}
In order to be more specific, let us pick $c_{1}:=-1/3$ and $c_{2}:=1129/729$.
Then%
\[
\alpha=\sqrt[3]{2(c_{2}+1/3)}=\sqrt[3]{2\cdot\frac{1129+3^{5}}{3^{6}}%
}=\sqrt[3]{2\cdot\frac{1372}{3^{6}}=}\sqrt[3]{\frac{14^{3}}{9^{3}}=}\frac
{14}{9}.
\]
In this case, equations (\ref{eq27}) assume the form%
\[
(\kappa_{1})^{3}-\frac{7\omega^{s}}{3}(\kappa_{1})^{2}+\frac{4}{3}=0.
\]
Then for $s=0$ the possible values of $\kappa_{1}$ are given by the solutions
of the cubic equation $z^{3}-\frac{7}{3}z^{2}+\frac{4}{3}=0$, which are
$\{2,1,-2/3\}$. Since $\kappa=(\kappa_{1},\kappa_{2})=(\kappa_{1},\kappa
_{1}-\omega^{s}\alpha)$, then%
\[
(s=0):\kappa=\left\{
\begin{array}
[c]{l}%
(2,2-14/9)=(2,4/9);\\
(1,1-14/9)=(1,-5/9);\\
(-2/3,-2/3-14/9)=(-2/3,-20/9).
\end{array}
\right.
\]
For each $s=1,2,$ the solutions are of the form%
\[
(s=1):\kappa=\left\{
\begin{array}
[c]{l}%
\left(  2e^{\frac{2\pi\boldsymbol{i}}{3}},\frac{4}{9}e^{\frac{2\pi
\boldsymbol{i}}{3}}\right) \\
\left(  e^{\frac{2\pi\boldsymbol{i}}{3}},-\frac{5}{9}e^{\frac{2\pi
\boldsymbol{i}}{3}}\right) \\
\left(  -\frac{2}{3}e^{\frac{2\pi\boldsymbol{i}}{3}},-\frac{20}{9}%
e^{\frac{2\pi\boldsymbol{i}}{3}}\right)
\end{array}
\right.  (s=2):\kappa=\left\{
\begin{array}
[c]{l}%
\left(  2e^{\frac{4\pi\boldsymbol{i}}{3}},\frac{4}{9}e^{\frac{4\pi
\boldsymbol{i}}{3}}\right) \\
\left(  e^{\frac{4\pi\boldsymbol{i}}{3}},-\frac{5}{9}e^{\frac{4\pi
\boldsymbol{i}}{3}}\right) \\
\left(  -\frac{2}{3}e^{\frac{4\pi\boldsymbol{i}}{3}},-\frac{20}{9}%
e^{\frac{4\pi\boldsymbol{i}}{3}}\right)  .
\end{array}
\right.
\]

In order to be more precise, let us write down the explicit expressions of the
functions. First recall from (\ref{eq17}) that%
\[
f(x,y)=y^{3p}-\frac{3}{2}(\kappa_{1}+\kappa_{2})y^{2p}x^{q}+3\kappa_{1}%
\kappa_{2}y^{p}x^{2q}-x^{3q}.
\]
Thus for each value of $\kappa$ we have the following quasi-homogeneous
functions in three distinct analytic classes%
\[
\kappa=\left\{
\begin{array}
[c]{lll}%
(2,4/9) & \Longrightarrow & f^{1}(x,y)=y^{3p}-\frac{33}{9}y^{2p}x^{q}%
+\frac{24}{9}y^{p}x^{2q}-x^{3q},\\
(1,-5/9) & \Longrightarrow & f^{2}(x,y)=y^{3p}-\frac{6}{9}y^{2p}x^{q}%
-\frac{15}{9}y^{p}x^{2q}-x^{3q},\\
(-2/3,-20/9) & \Longrightarrow & f^{3}(x,y)=y^{3p}+\frac{39}{9}y^{2p}%
x^{q}+\frac{40}{9}y^{p}x^{2q}-x^{3q},
\end{array}
\right.
\]
whose polars are given by%
\[
\kappa=\left\{
\begin{array}
[c]{lll}%
(2,4/9) & \Longrightarrow & \partial_{y}f^{1}(x,y)=3py^{p-1}(y^{2p}-\frac
{22}{9}y^{p}x^{q}+\frac{8}{9}x^{2q}),\\
(1,-5/9) & \Longrightarrow & \partial_{y}f^{2}(x,y)=3py^{p-1}(y^{2p}-\frac
{4}{9}y^{p}x^{q}-\frac{5}{9}x^{2q}),\\
(-2/3,-20/9) & \Longrightarrow & \partial_{y}f^{3}(x,y)=3py^{p-1}(y^{2p}%
+\frac{26}{9}y^{p}x^{q}+\frac{40}{27}x^{2q}).
\end{array}
\right.
\]
From (\ref{eq17}) and (\ref{eq18}), we have $\rho_{0}=1$ and
\[
\rho_{j}=(\kappa_{j}-\lambda_{1})(\kappa_{j}-\lambda_{2})(\kappa_{j}%
-\lambda_{3})=\kappa_{j}^{3}-\sigma_{1}(\lambda)\kappa_{j}^{2}+\sigma
_{2}(\lambda)\kappa_{j}^{2}-1.
\]
Thus%
\[
\kappa=\left\{
\begin{array}
[c]{lll}%
(2,4/9) & \Longrightarrow & \left\{
\begin{array}
[c]{l}%
\rho_{1}=\kappa_{1}^{3}-\frac{33}{9}\kappa_{1}^{2}+\frac{24}{9}\kappa
_{1}-1=-21/9\\
\rho_{2}=\kappa_{2}^{3}-\frac{33}{9}\kappa_{2}^{2}+\frac{24}{9}\kappa
_{2}-1=-329/729
\end{array}
\right. \\
(1,-5/9) & \Longrightarrow & \left\{
\begin{array}
[c]{l}%
\rho_{1}=\kappa_{1}^{3}-\frac{6}{9}\kappa_{1}^{2}-\frac{15}{9}\kappa
_{1}-1=-21/9\\
\rho_{2}=\kappa_{2}^{3}-\frac{6}{9}\kappa_{2}^{2}-\frac{15}{9}\kappa
_{2}-1=-329/729
\end{array}
\right. \\
(-2/3,-20/9) & \Longrightarrow & \left\{
\begin{array}
[c]{l}%
\rho_{1}=\kappa_{1}^{3}+\frac{39}{9}\kappa_{1}^{2}+\frac{40}{9}\kappa
_{1}-1=-21/9\\
\rho_{2}=\kappa_{2}^{3}+\frac{39}{9}\kappa_{2}^{2}+\frac{40}{9}\kappa
_{2}-1=-329/729
\end{array}
\right.
\end{array}
\right.
\]
Therefore, $f^{1}$, $f^{2}$ and $f^{3}$ have the same Henry-Parusi\'{n}ski
invariant but represent three distinct analytic classes.
\end{example}

\end{document}